\newtheorem{prop}{Proposition}[section]
\newtheorem{teo}{Theorem}[section]
\newtheorem{lema}{Lemma}[section]
\newtheorem{coro}{Corollary}[section]
\theoremstyle{definition}
\def\ep{\varepsilon}
\def\a{\mathfrak q}
\def\R{\mathbb R}
\def\K{{\mathcal K}}
\begin{document}
\title[Nonlocal diffusion
on the half line]{Asymptotic behavior for a nonlocal diffusion
equation on the half line}

\author[Cort\'{a}zar, Elgueta, Quir\'{o}s \and Wolanski]{C. Cort\'{a}zar, M. Elgueta, F. Quir\'{o}s \and N. Wolanski}

\address{Carmen Cort\'{a}zar\hfill\break\indent
Departamento  de Matem\'{a}tica, Pontificia Universidad Cat\'{o}lica
de Chile \hfill\break\indent Santiago, Chile.} \email{{\tt
ccortaza@mat.puc.cl} }

\address{Manuel Elgueta\hfill\break\indent
Departamento  de Matem\'{a}tica, Pontificia Universidad Cat\'{o}lica
de Chile \hfill\break\indent Santiago, Chile.} \email{{\tt
melgueta@mat.puc.cl} }

\address{Fernando Quir\'{o}s\hfill\break\indent
Departamento  de Matem\'{a}ticas, Universidad Aut\'{o}noma de Madrid
\hfill\break\indent 28049-Madrid, Spain.} \email{{\tt
fernando.quiros@uam.es} }

\address{Noemi Wolanski \hfill\break\indent
Departamento  de Matem\'{a}tica, FCEyN,  UBA,
\hfill\break \indent and
IMAS, CONICET, \hfill\break\indent Ciudad Universitaria, Pab. I,\hfill\break\indent
(1428) Buenos Aires, Argentina.} \email{{\tt wolanski@dm.uba.ar} }

\thanks{All authors supported by  FONDECYT grants 7090027 and 1110074. The third author supported by
the Spanish Project MTM2011-24696. The fourth author supported by
the Argentine Council of Research, CONICET under the project PIP625,
Res. 960/12 and UBACYT X117. N. Wolanski is a member of CONICET}

\keywords{Nonlocal diffusion, asymptotic behavior,
matched asymptotics.}

\subjclass[2010]{%
35R09, 
45K05, 
45M05. 
}

\date{}

\begin{abstract}
We study the large time behavior of solutions to a
non-local diffusion equation,  $u_t=J*u-u$ with $J$ smooth, radially symmetric and compactly supported, posed in $\mathbb{R}_+$ with zero Dirichlet boundary conditions. In sets of the form $x\ge \xi t^{1/2}$, $\xi>0$, the outer region, the asymptotic behavior is given by a multiple of the dipole solution for the local heat equation, and the solution is $O(t^{-1})$.  The proportionality  constant is determined from a conservation law, related to the asymptotic first momentum.
On compact sets, the inner region,  after scaling the solution by a factor  $t^{3/2}$, it converges to a multiple of the unique stationary solution of the problem that behaves as $x$ at infinity. The precise proportionality factor is obtained through a matching procedure with the outer behavior. Since the outer and the inner region do not overlap, the matching is quite involved. It has to be done for the scaled function $t^{3/2}u(x,t)/x$, which takes into account that different scales lead to different decay rates.
\end{abstract}

\maketitle

\date{}

\section{Introduction}
\label{Intro} \setcounter{equation}{0}

The purpose of this paper is to study the large-time behavior of the solution
to the nonlocal
problem
\begin{equation}
\label{eq:main}
\begin{cases}
\displaystyle u_t(x,t)=(J*u)(x,t)-u(x,t),\quad&x\ge 0,\ t>0,\\
u(x,t)=0,&x<0,\ t>0,\\
u(x,0)=u_0(x),&x\in\mathbb{R},
\end{cases}
\end{equation}
with a kernel $J$ that is assumed to be a nonnegative continuous
function with unit integral. We will restrict ourselves to the case
where $J$ is smooth, radially symmetric,  with
a compact support contained in  the ball of radius $d$.
Some of our proofs also require that $J$ does not increase in~$\R_+$.

The initial data $u_0$ are assumed to be bounded and
identically zero for $x<0$, and to have a finite mass and a finite first momentum, $\int_{\mathbb{R}_+}
u_0(x)(1+x)\,dx<\infty$. In order to prove our main results, they are also required to have a finite second momentum.

It is easy to prove by means of a fixed point argument that there
exists a unique solution $u\in
C\big([0,\infty);L^1\big(\R_+,(1+x)\,dx\big)\big)$ of problem~\eqref{eq:main}; see for instance \cite{CEQW} for an analogous proof.

Evolution problems with this type of diffusion have been widely
considered in the literature, since they can be used to model the
dispersal of a species by taking into account long-range effects,
\cite{BZ}, \cite{CF}, \cite{Fi}. It has also been  proposed as a
model for phase transitions, \cite{BCh1},  \cite{BCh2}, and, quite
recently, for image enhancement, \cite{GO}.

\medskip

\noindent\textsc{The standard (local) heat equation. } To get an idea of which kind of results we may expect, let us take a look at the local counterpart of our equation, the standard heat equation on the half-line
\begin{equation}
\label{eq:local.heat.equation}
\begin{cases}
u_t=u_{xx},&x\in\R_+,\ t>0,\\
u(0,t)=0,&t>0,\\
u(x,0)=u_0(x),&x\in\R_+.
\end{cases}
\end{equation}
By extending the solution from $\R_+$ to $\R$ as an antisymmetric function, one obtains a representation of the solution in terms of the fundamental solution to the heat equation in the whole real line, $\Gamma$, in the form
$$
u(x,t)=\int_0^{\infty}\left(\Gamma(x-y,t)-\Gamma(x+y,t)\right)u_0(y)\,dy.
$$
This representation may be used to determine the large time behavior of the solution. However, if the problem is nonlocal (or nonlinear) this technique will not work. Hence we prefer a different approach which may be applied to other problems, and in particular to \eqref{eq:main}.

The key point is that the antisymmetric extension of the solution to the whole real line is a solution to the Cauchy problem that has vanishing integral for all times. It is well known that the first term of the asymptotic expansion of solutions of the Cauchy problem for the heat equation is given by a multiple of the fundamental solution $\Gamma$ with the same integral. Since in our case this term vanishes, we have to go further in the asymptotic expansion in order to obtain a nontrivial description of the large time behavior. Fortunately, the next term in the expansion is available if the solution has a finite first momentum; see~\cite{DZ}. It is given in terms of the so-called \emph{dipole} solution, $D(x,t)=\Gamma_x(x,t)$, as
\begin{equation}
\label{eq:outer.behavior.local.he}
t\|u(x,t)+C^*D(x,t)\|_{L^\infty(\R_+)}\to0\quad\text{as }t\to\infty.
\end{equation}
The proportionality constant $C^*$ can be determined in terms of the initial data thanks to the conservation of the first momentum,
$$
M_1(t):=\int_0^\infty u(x,t)x\,dx=\int_0^\infty u_0(x)x\,dx=:M_1^*.
$$
Indeed, since $\int_0^\infty D(x,t)x\,dx=-1/2$, then necessarily $C^*=2M_1^*$.

Let us remark that  $D(x,t)=O(t^{-1})$ on  \emph{outer regions} of the form $|x|\ge \xi t^{1/2}$ for every $\xi>0$. However,  $D(x,t)=O(t^{-3/2})$ for $x$ in compact sets. Hence, \eqref{eq:outer.behavior.local.he} only says that $u=o(t^{-1})$  in the \emph{inner region}, close to the origin, and we have to look for a different scaling in order to have a nontrivial asymptotic profile in compact sets.

Having in mind the dipole solution, we expect a behavior $u(x,t)\approx C x t^{-3/2}$ for $x$ bounded. This idea suggests that we
consider a new variable
\begin{equation}
\label{eq:scale.change}
v(x,t)=\frac{t^{3/2}}{x+1} u(x,t),
\end{equation}
which satisfies
$$
v_t=v_{xx}+\frac{2v_x}{x+1}+\frac32\frac{v}{t}.
$$
The equation is posed in the domain (which is not a
cylinder) $0<x<\varepsilon t^{1/2}$, with data $v(0,t)=0$,
$v(\varepsilon t^{1/2},t)\approx \frac{M_1^*}{2\sqrt{\pi}}$, the latter condition being given by the outer behavior, taking $\varepsilon\to0$.

The last term in the equation is asymptotically small. Hence we expect convergence
to the solution of the ODE
$$
v_{xx}+\frac{2v_x}{{x+1}}=0
$$
satisfying the boundary condition $v(+\infty)=\frac{M_1^*}{2\sqrt{\pi}}$. This solution
turns out to be
$$
v_\infty(x)=\frac{M_1^*}{2\sqrt{\pi}}\frac{x}{(x+1)},
$$
which means a behavior $u(x,t)\approx \frac{M_1^*}{2\sqrt{\pi}}x$ in the original
variables. The precise expected result can be expressed as
$$
\frac {t^{3/2}}{x+1}\Big|u(x,t)-\frac{M_1^*\phi(x)}{2\sqrt{\pi}t^{3/2}}\Big|\to0\quad\text{as }t\to\infty \quad\text{uniformly in compact subsets of }\overline\R_+,
$$
where
$\phi(x)=x$ turns out to be the unique stationary solution to the heat equation in $\R_+$ such that $\phi(0)=0$, $(\phi(x)-x)\in L^\infty(\R_+)$.

This formal computation can be made rigorous. In fact, we are able to combine the inner and the outer behavior in a global approximate asymptotic solution which also gives the large time behavior in a continuum of intermediate scales. \begin{teo}\label{thm:heat}
Assume
$u_0\in L^1(\R_+,(1+x+x^2)\,dx)$. Let $u$ be the solution to~\eqref{eq:local.heat.equation}. Then,
\[
\frac {t^{3/2}}{x+1}\Big|u(x,t)+2M_1^*D(x,t)\Big|\to0\quad\text{as }t\to\infty \quad\text{uniformly in }\overline\R_+.
\]
\end{teo}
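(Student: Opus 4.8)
The plan is to prove the uniform estimate by splitting $\overline{\R}_+$ into the outer region $x\ge \xi t^{1/2}$ and the inner region $x\le \xi t^{1/2}$ for a small but fixed $\xi>0$, and to show that on each region the desired quantity is small, uniformly, once $t$ is large, for a suitable choice of $\xi$. Since the weight $\frac{t^{3/2}}{x+1}$ depends on the scale, the two regions really do require different arguments, and the content of the theorem is precisely that the two match up in a continuum of intermediate scales. I would first record the pointwise size of the dipole: $D(x,t)=\Gamma_x(x,t)=-\frac{x}{2t}\Gamma(x,t)$, so that $\frac{t^{3/2}}{x+1}|D(x,t)|\le C\frac{x^2}{(x+1)t}e^{-x^2/4t}$, which is bounded by $C\xi^2$ on $x\le\xi t^{1/2}$ and by $Ct^{-1/2}$ where $x\ge\xi t^{1/2}$ — in particular $\frac{t^{3/2}}{x+1}|2M_1^*D(x,t)|\to0$ uniformly on any outer region, so there the whole burden falls on $u$ itself.

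For the \emph{outer region} $x\ge\xi t^{1/2}$, I would invoke the known refined asymptotics \eqref{eq:outer.behavior.local.he} for the heat equation on the half-line (available since $u_0\in L^1(\R_+,(1+x+x^2)\,dx)$, see \cite{DZ}), namely $t\|u(\cdot,t)+2M_1^*D(\cdot,t)\|_{L^\infty(\R_+)}\to0$. On $x\ge\xi t^{1/2}$ one has $\frac{t^{3/2}}{x+1}\le\frac{t^{3/2}}{\xi t^{1/2}}=\xi^{-1}t$, hence
\[
\sup_{x\ge\xi t^{1/2}}\frac{t^{3/2}}{x+1}\big|u(x,t)+2M_1^*D(x,t)\big|\le \xi^{-1}\,t\,\|u(\cdot,t)+2M_1^*D(\cdot,t)\|_{L^\infty(\R_+)}\to0
\]
as $t\to\infty$, for each fixed $\xi>0$. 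This disposes of the outer region outright.

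For the \emph{inner region} $0\le x\le\xi t^{1/2}$ I would carry out the rescaling already indicated in the introduction. Writing $w(x,t)=\frac{t^{3/2}}{x+1}\big(u(x,t)+2M_1^*D(x,t)\big)$, one checks that $w$ solves a parabolic equation of the form $w_t=w_{xx}+\frac{2w_x}{x+1}+\frac32\frac{w}{t}$ plus a lower-order forcing coming from the fact that $D$ is not an exact stationary-scaled solution (the residual is $O(t^{-5/2})$ times polynomial weights, hence negligible after multiplication by $\frac{t^{3/2}}{x+1}$ on compact $x$-sets). Since $w$ vanishes at $x=0$ and, by the outer step just proved, is as small as we like on the moving boundary $x=\xi t^{1/2}$ once $t$ is large, the maximum principle on the (noncylindrical) domain $\{0<x<\xi t^{1/2},\ t>t_0\}$ together with the smallness of the forcing and of the initial trace forces $\sup_{0\le x\le\xi t^{1/2}}|w(x,t)|$ to be small. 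The extra term $\frac32 w/t$ is handled by a substitution $w=t^{3/2}\widetilde w$ or simply by absorbing it with a Grönwall-type factor $(t/t_0)^{3/2}$, which is harmless on any finite time interval and, combined with the decay of the boundary data, still tends to zero — this is where one must be slightly careful. Choosing first $\xi$ small to make the boundary contribution of the dipole small and then $t$ large completes the estimate on the inner region, and the two regions together give the uniform bound on all of $\overline{\R}_+$.

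The main obstacle I anticipate is the treatment of the positive zeroth-order term $\frac32 w/t$ in the rescaled equation on the \emph{noncylindrical} inner domain: a naive maximum principle picks up a growing factor from this term, so one has to exploit that the boundary data on $x=\xi t^{1/2}$ decay (they are $o(1)$ from the outer expansion, in fact one should extract a rate, e.g. $o(t^{-\delta})$ or track the explicit $\xi$-dependence) fast enough to beat the $t^{3/2}$ growth, or alternatively build an explicit supersolution of the form $A\,t^{3/2}\psi(x/\sqrt t)$ adapted to the domain. Getting this balance right — quantifying the outer convergence rate well enough that, after the $t^{3/2}$ amplification, the inner comparison still closes — is the technical heart of the argument; everything else is a routine maximum-principle-plus-rescaling computation.
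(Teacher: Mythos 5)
Your outer-region argument is fine and is essentially the paper's first step (the paper uses the quantitative bound $|u(x,t)+2M_1^*D(x,t)|\le Ct^{-3/2}$ from \cite{DZ}, which gives a rate, but your qualitative version suffices for that region). The gap is in the inner region, and it sits exactly where you yourself flag ``the technical heart'': the zeroth-order term $\tfrac32 w/t$ in the equation for $w=\tfrac{t^{3/2}}{x+1}\big(u+2M_1^*D\big)$ cannot be ``absorbed with a Gr\"onwall-type factor $(t/t_0)^{3/2}$''. That factor is unbounded on $[t_0,\infty)$, whereas the data you can feed into the comparison are only $o(1)$ on the moving boundary $x=\xi t^{1/2}$ (at best $O(t^{-1/2})$ there, since $|w|\le \tfrac{t^{3/2}}{\xi t^{1/2}}\cdot Ct^{-3/2}$), and merely bounded --- not small --- on the initial slice $t=t_0$; so $t^{3/2}\times o(1)$ and $t^{3/2}\times O(1)$ both diverge and the naive maximum principle does not close. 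Note also that positive constants are strict \emph{sub}solutions of the $w$-equation because of the $+\tfrac32 w/t$ term, so a small constant cannot serve as a supersolution either. (A minor side point: $D=\Gamma_x$ is an exact solution of the heat equation, so $u+2M_1^*D$ solves it with no forcing; the ``residual'' you mention does not arise.)

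What actually closes the argument --- and what you gesture at with ``alternatively build an explicit supersolution adapted to the domain'' but do not construct --- is the paper's barrier $v_\pm(x,t)=-D(x,t)\pm K_\pm t^{-\frac{3+\kappa}{2}}(x+1)^\gamma$ with $\kappa,\gamma\in(0,1)$. The identity
\[
\partial_t v_\pm-\partial_{xx}v_\pm=\pm K_\pm t^{-\frac{3+\kappa}{2}}(x+1)^\gamma\Bigl(\frac{\gamma(1-\gamma)}{(x+1)^2}-\frac{3+\kappa}{2t}\Bigr)
\]
shows that the correction term is a strict super/subsolution of the heat equation precisely on the parabolic region $x<\mu_* t^{1/2}$ with $\mu_*=\bigl(\gamma(1-\gamma)/(2(3+\kappa))\bigr)^{1/2}$: the spatial concavity of $(x+1)^\gamma$ beats the time derivative there, and this is what replaces the Gr\"onwall factor. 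Comparison is then applied directly to $u$ against $(1\pm\ep)\,2M_1^*v_\pm$ on $\{0<x<\mu t^{1/2}\}$ with $\mu<\mu_*$ (no weighted equation needed): the free constant $K_\pm$ absorbs the non-small initial trace at $t=t_\ep$, the outer expansion together with $-tD(\mu t^{1/2},t)=c_\mu>0$ gives the ordering on the moving boundary, and dividing by $\tfrac{x+1}{t^{3/2}}$ at the end leaves an error of size $C\ep+Ct^{-\kappa/2}$. Until you exhibit a supersolution with these properties --- in particular one whose favorable sign holds only for $x\lesssim t^{1/2}$, which is what forces the restriction $\mu<\mu_*$ on the inner region --- the inner half of your proof is not complete.
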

We have not found a proof of this fact (not even a statement) in the literature, and we include one here for the sake of completeness.

\medskip

\noindent\emph{Remark. } It is enough to perform the proof for nonnegative  initial data. Indeed, if $u^\pm$ are the solutions with initial data $\{u_0\}_\pm$, then, by the linearity of the equation,  $u=u^+-u^-$. Since $M_1^*=\int_0^\infty\{u_0(x)\}_+x\,dx-\int_0^\infty\{u_0(x)\}_-x\,dx$, the result for general data will follow from the results for $u^+$ and $u^-$. Notice however that in the case of initial data with sign changes it may happen that $M_1^*=0$.  In this situation our result is not optimal (solutions decay faster), and we should look for a different scaling.

\medskip

\noindent\textsc{Main results. }
As observed in~\cite{CERW2},
under the usual parabolic scaling our operator `converges' to
$\partial_t-\a\Delta$, where $\a=\frac1{2}\int_\mathbb{R} J(z)|z|^2\,dz$. Hence, after a certain antisymmetrization procedure, we expect  the outer behavior to be given by a multiple  of the dipole solution to the heat equation with diffusivity $\a$, $D_\a(x,t)=D(x,\a t)$.
How do we determine the proportionality constant $C^*$? The key point is that though the first momentum of the solution is not conserved, it converges to a nontrivial constant $M_1^*$. This asymptotic constant coincides with an invariant of the evolution $\int_0^\infty u(x,t)\phi(x)\,dt$, where
$\phi$ is the unique solution to the stationary problem
\begin{equation}
\label{eq:stationary.problem}
J*\phi=\phi\quad\text{for }x\ge 0,\qquad \phi=0\quad\text{for }-d<x<0,\qquad (\phi(x)-x)\in L^\infty(\R_+).
\end{equation}
Therefore, the proportionality constant is necessarily
$$
C^*=2\underbrace{\int_0^\infty u_0(x)\phi(x)\,dx}_{M_1^*}.
$$

Let us remark that, due to the nonlocal character of the diffusion operator, the antisymmetric extension of the solution in $\R_+$ --with respect to no matter what point--  to the whole real line is not a solution of the Cauchy problem. Hence, the convergence argument is not so direct as the one for the local heat equation. We will need to build up sub and supersolutions having the right common asymptotic behavior.

As for the inner behavior, the scaled variable $v$ given by \eqref{eq:scale.change} should converge to a multiple of the solution $\phi$ to \eqref{eq:stationary.problem}, the proportionality constant being obtained by matching with the outer behavior. Both behaviors can be combined in a unique approximating function.
\begin{teo}\label{thm:main} Let $u_0\in L^1(\R_+,(1+x+x^2)\,dx)\cap L^\infty(\R_+)$. Let $J$ be smooth, radially symmetric, nonincreasing in $\R_+$ with support in the ball $(-d,d)$.  Let $u$ be the solution to \eqref{eq:main}. Then,
\begin{equation}
\label{eq:main.result}
\frac {t^{3/2}}{x+1}\Big|u(x,t)+2M_1^*\frac{\phi(x)}{x}D_\a(x,t)\Big|\to0\quad\text{as }t\to\infty \quad\text{uniformly in }\overline\R_+.
\end{equation}
\end{teo}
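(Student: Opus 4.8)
The plan is to establish \eqref{eq:main.result} by combining an outer estimate, valid for $x\ge\xi t^{1/2}$, with an inner estimate valid on compact sets and, crucially, a family of intermediate estimates gluing the two together on a continuum of scales. The outer analysis is the heart of the matter. Following the program sketched in the introduction for the local heat equation, I would first show that $M_1(t)=\int_0^\infty u(x,t)\phi(x)\,dx$ is \emph{exactly} conserved: differentiating and using $J*\phi=\phi$ on $\R_+$ together with $\phi=0$ on $(-d,0)$ and the fact that $u$ is supported in $\R_+$, one checks $\frac{d}{dt}\int_0^\infty u(\cdot,t)\phi = \int_0^\infty(J*u-u)\phi = \int_0^\infty u\,(J*\phi-\phi)=0$ (the boundary terms cancel because $\phi$ vanishes where $u$ would otherwise contribute). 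Hence $M_1(t)\equiv M_1^*$, which fixes the constant $C^*=2M_1^*$ as in the local case. For the convergence to $-C^*D_\a$ in the outer region, since the antisymmetric extension trick is unavailable, I would instead construct explicit sub- and supersolutions of \eqref{eq:main} of the form $\mp C^*D_\a(x,t)\pm (\text{lower order corrections involving }t^{-1-\delta}\text{ barriers and the stationary }\phi)$, using that under parabolic scaling the operator $J*{\cdot}-{\cdot}$ approximates $\a\partial_{xx}$ with an error controlled by the finite second momentum of $u_0$; the nonincreasing hypothesis on $J$ is what makes the comparison principle and the barrier construction work.

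Second, for the inner region I would pass to the scaled variable. Setting $v(x,t)=\frac{t^{3/2}}{x+1}u(x,t)$ does not directly apply since $u$ solves the nonlocal equation, so instead I would work with $w(x,t)=t^{3/2}u(x,t)$ and show that $w(\cdot,t)\to \kappa\,\phi$ uniformly on compacts, where $\phi$ solves the stationary problem \eqref{eq:stationary.problem} and $\kappa$ is to be matched. The mechanism: $w$ satisfies $w_t = J*w - w + \frac32 t^{-1}w$, the last term being asymptotically negligible, so any limit along $t\to\infty$ is a bounded solution of $J*\psi=\psi$ on $\R_+$, $\psi=0$ on $(-d,0)$; by the characterization of $\phi$ as the unique such solution growing like $x$ (uniqueness should be available from the stationary analysis, cf. the discussion around \eqref{eq:stationary.problem}), any limit is a multiple of $\phi$, provided one controls the growth of $w$ at the edge of the inner region — and this is exactly where the outer estimate feeds in, pinning $\kappa=\frac{M_1^*}{2\sqrt\pi}$ (equivalently the coefficient appearing in \eqref{eq:main.result} after writing $D_\a(x,t)\sim \frac{x}{2\sqrt{\pi}(\a t)^{3/2}}$ times the profile correction $\phi(x)/x$). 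Compactness in this step comes from a priori bounds on $w$ and $J*w$ which give equicontinuity.

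Third, and this is the genuinely delicate part, I would prove the intermediate estimates needed for uniformity over all of $\overline\R_+$. Because the outer region $x\ge\xi t^{1/2}$ and the inner region ($x$ bounded) do not overlap, one cannot patch directly; instead one must show \eqref{eq:main.result} holds uniformly on each scale $x\sim t^{\beta}$, $0\le\beta\le 1/2$, for the \emph{rescaled} quantity $\frac{t^{3/2}}{x+1}u(x,t)$, which is designed precisely so that the different decay rates $t^{-1}$ (outer) and $t^{-3/2}$ (inner) are absorbed into a single bounded object. The approximating function $-2M_1^*\frac{\phi(x)}{x}D_\a(x,t)$ is itself a near-solution of \eqref{eq:main} on every scale: $\frac{\phi(x)}{x}\to1$ as $x\to\infty$ recovers $-2M_1^*D_\a$ in the outer zone, while $D_\a(x,t)\sim \frac{x}{2\sqrt\pi(\a t)^{3/2}}$ for $x$ small recovers the multiple of $\phi$ in the inner zone. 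I would estimate the nonlocal residual $\big(\partial_t - J*{\cdot}+{\cdot}\big)\!\big(2M_1^*\frac{\phi(x)}{x}D_\a\big)$ and show it is $o\big(\frac{x+1}{t^{3/2}}\big)$ uniformly, then close the argument by comparison with barriers of the same homogeneity, again using monotonicity of $J$ and the finite second momentum.

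The main obstacle I anticipate is controlling the nonlocal error of the product $\frac{\phi(x)}{x}D_\a(x,t)$ on the intermediate scales: neither factor alone is annihilated by $J*{\cdot}-{\cdot}$ or by $\partial_t-\a\partial_{xx}$, and $J*(fg)\ne (J*f)g$, so one must carefully expand $J*\big(\frac{\phi}{x}D_\a\big)$ using Taylor expansion of the smooth factor $D_\a$ against the oscillation of $\phi/x$, keeping track of how the error degrades as $x$ ranges from $O(1)$ up to $O(t^{1/2})$ and of the boundary layer near $x=0$ where $\phi(x)/x$ and $\phi(x)-x$ interact with the nonlocality on the scale of $\mathrm{supp}\,J$. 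Making this uniform — rather than scale-by-scale — is what forces the somewhat unusual weight $\frac{t^{3/2}}{x+1}$ and is the technical core of the paper.
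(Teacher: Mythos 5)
Your high-level architecture (conservation of $\int_0^\infty u\phi$ fixes the constant $2M_1^*$; outer behavior by comparison; inner behavior by rescaling towards the stationary solution; a matching over intermediate scales carried by the weight $t^{3/2}/(x+1)$) is the right one, and the exact conservation law is argued correctly. But the two load-bearing steps are not actually supplied. First, the outer region: you dismiss antisymmetrization as ``unavailable,'' whereas it is precisely the paper's key device. The antisymmetric extension of $u(\cdot,t_0)$ about $\bar x=-d$ (resp.\ $\bar x=0$) is indeed not a solution of \eqref{eq:main}, but it is a legitimate initial datum for the Cauchy problem on all of $\R$; Proposition~\ref{prop:antisymmetry.positivity} (this is where the monotonicity of $J$ enters, via Wintner's theorem on convolutions of symmetric nonincreasing kernels) shows the resulting whole-line solutions keep a sign on $(-d,0)$ and are therefore super-/sub-solutions of \eqref{eq:main}, with known refined asymptotics and first momenta $M_1^\pm(t_0)\to 2M_1^*$. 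Your substitute --- barriers $\mp C^*D_\a\pm(\text{corrections of order }t^{-1-\delta})$ --- cannot be initialized: at $t=t_0$ the barrier must dominate the arbitrary profile $u(\cdot,t_0)$, and any additive term large enough to achieve this while remaining $o(t^{-1})$ destroys the super-/sub-solution inequality, since the residual $(\partial_t-L)D_\a=O(t^{-2})$ is of the same cumulative order as the main term. You offer no way around this.

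Second, the matching. You correctly note that the inner and outer regions do not overlap, but you do not say how the gap is closed; the paper closes it by a bootstrap that is absent from your plan. The first outer result yields $\|u(\cdot,t)\|_{L^\infty}=O(t^{-1})$, hence the quantitative rates $M(t)=O(t^{-1/2})$, $|M_1(t)-M_1^*|=O(t^{-1/2})$, $M_2(t)=O(t^{1/2})$ of Corollary~\ref{cor:decay.rates.momenta}, hence (taking $t_0=t^{1/2}$ in the barrier construction) the error bound \eqref{eq:decay.rate.profile} of order $t^{-1/4}$, which extends the outer description down to $x\ge\mu t^{\beta}$ for every $\beta>1/4$ (Corollary~\ref{coro:outer.behavior.wider}). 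The inner comparison is then run on $\{0\le x\le\mu t^{\beta}\}$ with the explicit barriers $\phi(x)\omega(x,t)/t\pm K_\pm t^{-(3+\kappa)/2}(x+2d)^{\gamma}$ of Lemma~\ref{lem:v.plus.minus}, which are super-/sub-solutions there exactly when $\beta<\frac{1-\kappa}{2(2-\gamma)}$; since this threshold exceeds $1/4$ for suitable $\kappa,\gamma$, the two regions overlap and the lateral boundary data are supplied by the extended outer estimate. Without the $O(t^{-1/4})$ improvement there is no overlap and your inner comparison has no boundary condition at $x\sim t^{\beta}$. Two further points: your compactness route for the inner limit would require a Liouville-type classification of eternal, linearly growing solutions of $w_t=Lw$ on $\R_+$, which is nowhere established and is avoided by the barrier argument; and the paper deliberately matches against $\phi(x)\omega(x,t)/(\a t)$ rather than $\frac{\phi(x)}{x}D_\a(x,t)$ precisely because applying $L$ to the latter product on intermediate scales is the difficulty you yourself flag as unresolved.
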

Observe that Theorem \ref{thm:main} gives a continuum of intermediate decay rates starting with the global rate $O(t^{-1})$ holding in sets $x\ge \xi t^{1/2}$ with $\xi>0$ all the way up to $O(t^{-3/2})$ that holds on compact sets. These scales are explicitly given by the relation $\frac{t^{3/2}}{x}$. For instance,  $\frac{t^{3/2}}{h(t)}u(x,t)\to \frac{M_1^*\xi}{2\a^{3/2}\sqrt\pi}$ as $t\to\infty$ if $\frac x{h(t)}\to \xi$,  $h(t)\to\infty$ and $h(t)t^{-1/2}\to 0$ as $t\to\infty$.

\medskip

\noindent\emph{Remark. } As  in the local case, the result for initial data without a sign restriction will follow from the result for nonnegative initial data. Hence, in what follows we will always assume, without further mention, that $u_0\ge0$.

\medskip

\noindent\textsc{The problem in the whole space. }
When the problem is posed in the whole space the asymptotic behavior is quite different. Indeed, as proved in \cite{CCR},  for any dimension $N$, the solution $u$ to this problem satisfies
\[
t^{N/2}\|u(\cdot,t)-v(\cdot,t)\|_{L^\infty(\R^N)}\to 0\quad\mbox{as }t\to\infty,
\]
where $v$ is the solution to the heat equation with diffusivity $\a=\frac1{2N}\int_{\mathbb{R}^N} J(z)|z|^2\,dz$ and initial condition $v(x,0)=u(x,0)\in L^1(\R^N)\cap L^\infty(\R^N)$. Therefore,
\[
t^{N/2}\|u(x,t)-M\Gamma_\a(x,t)\|_{L^\infty(\R^N)}\to0\quad\mbox{as }t\to\infty,
\]
with $M=\int_{\mathbb{R}^N} u_0$ and $\Gamma_\a(x,t)=\Gamma(x,\a t)$. This is true even for changing sign solutions, as long as $M\ne0$.
In particular, if $N=1$ the global decay rate  is $O(t^{-1/2})$, much slower than for the problem in the half-line, which, due to the loss of mass at the boundary of the domain,  is $O(t^{-1})$.

\medskip

\noindent\textsc{The problem in domains with holes. }  Let us consider now the case where the spatial domain  is the complement in $\mathbb{R}^N$ of a bounded open set $\mathcal{H}$ with
smooth boundary, with zero data in the hole.  When $N\ge3$,
\begin{equation*}\label{eq:result.large.dimensions}
t^{N/2}|u(x,t)-M^*\phi(x)\Gamma_\a(x,t)|\to 0\quad\mbox{as }
t\to\infty \quad\mbox{uniformly in }\R^N,
\end{equation*}
where $\Gamma_\a(x,t)=\Gamma(x,\a t)$ is the fundamental solution of the heat equation with diffusivity $\a$ determined by the kernel $J$, $\phi$ is the unique solution to
\begin{equation*}\label{eq:stationary}
J*\phi=\phi\quad \mbox{in }\mathbb{R}\setminus\mathcal{H},\qquad
\phi=0\quad\mbox{in }\mathcal{H},\qquad \phi(x)\to 1\quad\text{as }|x|\to\infty,
\end{equation*}
and  $M^*=\int_{\mathbb{R}^N} u_0(x)\phi(x)\,dx$; see~\cite{CEQW}. The quantity $M^*$ turns out to be the asymptotic mass which is nontrivial in large dimensions, $N\ge3$. Notice that the rate of decay of solutions is the same one as for the problem in the whole space.

What happens in dimension $N=1$? If the hole is a \emph{large} interval, with a diameter bigger than the radius of the support of the kernel, the domain has two disconnected components which can be treated independently as problems on a half line (after a translation, and also a reflection, if we consider the component connected to $-\infty$). Therefore, we may apply the results of the present paper to describe the large time behavior for solutions of this problem. The situation is quite different from the one in large dimensions. On the one hand, mass decays to zero. Hence the need of using a different scaling for $u$ in the outer limit, which for the one dimensional case is not any more the one that preserves  mass, but the one that preserves the first momentum.
Another important difference with the case of large dimensions is that the rate of decay on compact sets differs from the global one; see  Theorem \ref{thm:main}. This makes the matching between the inner and the outer behavior more involved, requiring the use of the auxiliary variable~\eqref{eq:scale.change}.

If the size of the hole is small, it does not disconnect the real line: the symmetrization techniques used in this paper cannot be applied, and a different approach is needed. On the other hand, taking into account that the rate of decay for the problem in the whole space is different from the one when there is a big hole, a natural question arises: does the global rate of decay in dimension one depend on the size of the hole? This issues will be the subject of a forthcoming paper.

In the critical case $N=2$,  mass is also asymptotically lost. However, the local counterpart, studied in~\cite{Herraiz}, suggests that the outer limit will still be given by a multiple of the fundamental solution, but with logarithmic time corrections. This case will be treated elsewhere.

\medskip

\noindent\textsc{Organization of the paper. } Section 2 is devoted to the study of the stationary problem~\eqref{eq:stationary.problem}. We prove that there exists a unique stationary solution that behaves as $x$ at infinity. As part of the proof of the uniqueness of the solution, we find that there is no bounded nontrivial stationary solution. In Section 3 we find a conservation law and study the different momenta of the solution.
Sections 4 and 5 are devoted to the study of the asymptotic profile. In Section 4 we consider the far-field limit and in Section 5 we consider the near-field limit, first for our non-local problem~\eqref{eq:main} and then for the standard heat equation.

\medskip

\noindent\textit{Notations. } In what follows we will denote
$$
Lu(x,t):=(J*u)(x,t)-u(x,t), \qquad B_r:=(-r,r).
$$

\section{The stationary problem}
\setcounter{equation}{0}

In this section we will prove the existence
of a unique solution to  the stationary problem~\eqref{eq:stationary.problem}. The solution $\phi$ will be obtained as the limit, as $n$ tends to
infinity, of solutions $\phi_n$ of problem
\begin{equation}\label{Pn}
\begin{cases}
\displaystyle\phi_n(x)=\int_0^{n+d} J(x-y)\phi_n(y)\,dy,\quad&0\le x\le n,\\
\phi_n(x)=0,\quad&-d<x< 0,\\
\phi_n(x)=x,\quad&x>n.
\end{cases}
\end{equation}
So we start by constructing such a solution $\phi_n$.

\begin{lema}
Problem~\eqref{Pn} has a unique  solution $\phi_n$. It  satisfies $0\le\phi_n(x)\le x+d$.
\end{lema}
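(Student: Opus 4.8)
The plan is to read \eqref{Pn} as a linear fixed point problem on the \emph{bounded} interval $[0,n]$, solve it by a monotone iteration trapped between an explicit subsolution and supersolution (which also yields the bound $0\le\phi_n\le x+d$), and establish uniqueness by a maximum principle applied to the difference of two solutions. Concretely: since $\operatorname{supp}J\subseteq B_d$, for $x\in[0,n]$ the integral in \eqref{Pn} only involves the values of $\phi_n$ on $[0,n+d]$, which on $[n,n+d]$ are prescribed to equal $y$ and on $(-d,0)$ to vanish. Hence \eqref{Pn} is equivalent to asking that $\phi_n\in C([0,n])$ satisfy $\phi_n=T\phi_n$, where
\[
(T\psi)(x):=\int_0^n J(x-y)\psi(y)\,dy+g(x),\qquad g(x):=\int_n^{n+d}J(x-y)\,y\,dy\ge0,
\]
and $\phi_n$ is afterwards extended by $0$ on $(-d,0)$ and by $x$ on $(n,\infty)$. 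The operator $T$ is order preserving because $J\ge0$, and it maps bounded functions to continuous ones because $J$ is continuous and compactly supported; in particular any fixed point is automatically continuous.

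For existence and the bound I would first exhibit $\underline\psi\equiv0$ and $\overline\psi(x):=x+d$ as sub- and supersolution. Clearly $T\underline\psi=g\ge0=\underline\psi$. For $\overline\psi$, using $\int_\R J=1$ and the splitting $x+d=(x-y)+(y+d)$,
\[
\overline\psi(x)=\int_\R J(x-y)(x+d)\,dy=\int_\R J(x-y)(x-y)\,dy+\int_\R J(x-y)(y+d)\,dy=\int_\R J(x-y)(y+d)\,dy,
\]
the first term vanishing since $J$ is even. Splitting the last integral over $(-d,0)$, $[0,n]$ and $[n,n+d]$ (nothing lies outside because $0\le x\le n$), on $(-d,0)$ the integrand is $\ge0$ (there $y+d>x\ge0$) and $\int_n^{n+d}J(x-y)(y+d)\,dy\ge\int_n^{n+d}J(x-y)\,y\,dy$, so $\overline\psi(x)\ge\int_0^n J(x-y)(y+d)\,dy+g(x)=(T\overline\psi)(x)$. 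Now iterate from $\psi^{(0)}:=0$, $\psi^{(k+1)}:=T\psi^{(k)}$: order preservation gives $0=\psi^{(0)}\le\psi^{(1)}\le\cdots$ and $\psi^{(k)}\le\overline\psi$ for all $k$, so $\psi^{(k)}\uparrow\phi_n$ pointwise with $0\le\phi_n(x)\le x+d$. Passing to the limit in $\psi^{(k+1)}=T\psi^{(k)}$ by dominated convergence (dominating function $J(x-\cdot)(\cdot+d)\in L^1([0,n])$) shows $\phi_n=T\phi_n$, and $\phi_n$ is continuous because $T\phi_n$ is.

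For uniqueness, if $\phi_n^1,\phi_n^2$ both solve \eqref{Pn}, then $w:=\phi_n^1-\phi_n^2\in C([0,n])$ vanishes off $[0,n]$ and satisfies $w(x)=\int_0^n J(x-y)w(y)\,dy$ on $[0,n]$. Let $M:=\max_{[0,n]}w$; I claim $M\le0$. Suppose $M>0$, set $F:=\{x\in[0,n]:w(x)=M\}$ (nonempty, closed) and $x^*:=\max F$. Since $w\le M$ on $[0,n]$,
\[
M=w(x^*)=\int_0^n J(x^*-y)w(y)\,dy\le M\int_0^n J(x^*-y)\,dy\le M,
\]
so both inequalities are equalities: $\int_0^n J(x^*-y)\,dy=1=\int_\R J$ forces $\operatorname{supp}J(x^*-\cdot)\subseteq[0,n]$, and $w(y)=M$ for a.e.\ $y$ with $J(x^*-y)>0$, hence by continuity $w\equiv M$ on $x^*-\operatorname{supp}J$. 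With $d_0:=\max\{|z|:z\in\operatorname{supp}J\}>0$, the point $x^*+d_0$ lies in $x^*-\operatorname{supp}J\subseteq[0,n]$ and belongs to $F$, contradicting $x^*=\max F$. Thus $M\le0$, and applying the same argument to $-w$ gives $\min_{[0,n]}w\ge0$, so $w\equiv0$.

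The main obstacle is this uniqueness step: the kernel $J(x-y)$ restricted to $[0,n]$ is \emph{not} a strict contraction in the interior, where $\int_0^n J(x-y)\,dy=1$, so the maximum principle cannot be closed by a crude estimate, and it must furthermore remain valid when $J$ vanishes on part of $B_d$. This is precisely what the ``rightmost point of the maximum set'' device takes care of, together with the observation that in the equality case $w$ is pinned to the value $M$ on an entire translate of $\operatorname{supp}J$, which then protrudes beyond the last point of $F$.
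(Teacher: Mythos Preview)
Your proof is correct and complete, but the route differs from the paper's in the existence step. The paper writes the same operator $T$ and shows it maps the convex set $\K=\{\phi\in C([0,n]):0\le\phi\le x+d\}$ into itself by a three-case verification of the upper bound, then invokes compactness of $T$ (which uses the smoothness of $J$) and Schauder's fixed point theorem. You instead exhibit $0$ and $x+d$ as an ordered sub/supersolution pair and run a monotone iteration, passing to the limit by dominated convergence. Your approach is more elementary: it needs only continuity of $J$ (not smoothness, and no compactness argument), and the bound $0\le\phi_n\le x+d$ falls out of the iteration rather than requiring a separate case check. For uniqueness, the paper simply writes ``follows immediately from the maximum principle'' with no details; your rightmost-maximum-point argument supplies those details and, as you note, is robust enough to work even when $J$ vanishes on part of $B_d$. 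One cosmetic slip: in your supersolution check the parenthetical ``there $y+d>x\ge0$'' should read $y+d>0$, but the conclusion you draw from it is correct.
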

\begin{proof}Uniqueness follows immediately from the maximum principle.

To prove existence we consider the operator
\[
T\phi(x)= \int_0^n
J(x-y)\phi(y)\,dy+\int_n^{n+d}J(x-y)y\,dy\quad\mbox{for }0\le x\le n,
\]
and look for  a fixed point $\hat\phi$ for $T$ in the set
\[
\K=\{\phi\in C([0,n]):  0\le \phi(x)\le
x+d\}.
\]
If $\hat\phi$ is a fixed point for the operator $T$ in $\K$, then it is easily checked that
\begin{equation*}\label{phi}
\phi_n(x)=\begin{cases}
\hat\phi(x),&0\le x\le n,\\
0,&-d<x<0,\\
x,&x>n,
\end{cases}
\end{equation*}
is a solution of problem~\eqref{Pn}.

Since the  kernel $J$ is smooth, the operator $T$ is compact in $C([0,n])$. Hence,  to show that $T$ has a fixed point in the convex set $\K$ we only have to prove that $T(\K)\subset\K$.

Let $\phi \in \K$ and $x\in[0,n]$. We have to consider three different cases:
\begin{itemize}

\item[(i)] If $0\le x-d\le x+d\le n$, then
$$
T\phi(x)=\int_{x-d}^{x+d}J(x-y)\phi(y)\,dy\le \int_{x-d}^{x+d}J(x-y)(y+d)\,dy= x+d.
$$

\item[(ii)]  If $x-d\le 0\le x+d\le n$, then
\[
\begin{aligned}
T\phi(x)&= \int_0^{x+d}J(x-y)\phi(y)\,dy\le\int_0^{x+d}J(x-y)(y+d)\,dy
\\
&=x+d-\int_{x-d}^0 J(x-y)(y+d)\,dy \leq x+d.
\end{aligned}
\]

\item[(iii)]  If $x-d<n<x+d<n+d$, then, no matter wether $x-d$ is larger or smaller than $0$,
\[
\begin{aligned}
T\phi(x)&=\int_0^{n} J(x-y)\phi(y)\,dy+ \int_n^{x+d}J(x-y)y\,dy\\
&\le \int_0^{x+d}J(x-y)(y+d)\,dy\le x+d.
\end{aligned}
\]
\end{itemize}
\end{proof}

\begin{prop}
The stationary problem~\eqref{eq:stationary.problem} has  a solution $\phi$  such that
\begin{equation}
\label{eq:estimate.phi}
x\le\phi(x)\le x+d\quad\text{for }x\ge 0.
\end{equation}
\end{prop}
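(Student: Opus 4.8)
The plan is to obtain $\phi$ as a locally uniform limit of the truncated solutions $\phi_n$ produced by the previous Lemma. The Lemma already gives existence, uniqueness and the one-sided bound $0\le\phi_n(x)\le x+d$; the two points that remain are to upgrade this to the two-sided bound $x\le\phi_n(x)\le x+d$ \emph{uniformly in $n$}, and to extract enough compactness to pass to the limit in the equation.

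For the lower bound I would run a monotone iteration of the fixed-point operator $T$ from the proof of the Lemma. Since $J\ge0$, $T$ is order preserving, and the case analysis in the Lemma shows $T(\K)\subset\K$, so in particular $T(x+d)\le x+d$. The new ingredient is that $\psi(x)=x$ is a subsolution, $T\psi\ge\psi$ on $[0,n]$: writing $T\psi(x)=\int_0^{n+d}J(x-y)y\,dy$ and using that $J$ is even (hence $\int_\R J(x-y)y\,dy=x$) one gets
\[
T\psi(x)=x-\int_{-\infty}^0 J(x-y)y\,dy-\int_{n+d}^{\infty}J(x-y)y\,dy=x-\int_{-\infty}^0 J(x-y)y\,dy\ge x,
\]
the last tail vanishing because $\operatorname{supp}J\subset B_d$ and $x\le n$, and the remaining integral being $\le0$ since there $y<0$. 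Starting from $\psi\in\K$, the iterates $T^k\psi$ are then nondecreasing and, by induction and monotonicity, bounded above by $x+d$; they converge pointwise, by dominated convergence the limit is a continuous fixed point of $T$ lying between $x$ and $x+d$, and by the uniqueness part of the Lemma it must coincide with $\phi_n$. Hence $x\le\phi_n(x)\le x+d$ on $[0,n]$, uniformly in $n$. (Alternatively one can argue directly by a maximum principle: at a negative interior minimum $x_0$ of $\phi_n(x)-x$, with value $m$, the equation forces $\phi_n(x_0)\ge\int_\R J(x_0-y)(y+m)\,dy=x_0+m$, hence equality and $\phi_n(\cdot)=\cdot+m$ on $\operatorname{supp}J(x_0-\cdot)$; propagating this towards $x=0$, where $\phi_n\ge0>m$, gives a contradiction.)

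It remains to pass to the limit. Fix $R>0$; for $x,x'\in[0,R]$ and $n\ge R$ the identity $\phi_n(x)-\phi_n(x')=\int\bigl(J(x-y)-J(x'-y)\bigr)\phi_n(y)\,dy$ has integrand supported in $y\in(-d,R+d)$, where $0\le\phi_n(y)\le R+2d$, so $|\phi_n(x)-\phi_n(x')|\le(R+2d)\,\|J(x-\cdot)-J(x'-\cdot)\|_{L^1(\R)}$, a modulus of continuity independent of $n$ since $J$ is uniformly continuous. Together with the uniform bound, Arzel\`a--Ascoli on each interval $[0,k]$ and a diagonal extraction give a subsequence converging locally uniformly on $\overline\R_+$ to a continuous $\phi$ with $x\le\phi(x)\le x+d$; extend it by $0$ on $(-d,0)$. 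For fixed $x\ge0$ and $n$ large, $\phi_n(x)=\int_{\max(0,x-d)}^{x+d}J(x-y)\phi_n(y)\,dy$, and letting $n\to\infty$ along the subsequence (uniform convergence on the compact interval of integration) yields $\phi(x)=\int_\R J(x-y)\phi(y)\,dy=(J*\phi)(x)$. Since also $0\le\phi(x)-x\le d$, this $\phi$ solves~\eqref{eq:stationary.problem} and satisfies~\eqref{eq:estimate.phi}.

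The one step that is not routine bookkeeping is the subsolution property $T\psi\ge\psi$, equivalently the lower bound $\phi_n\ge x$: this is exactly where the evenness of $J$ and the finite range $d$ of the kernel enter, and once the uniform two-sided estimate is in hand the equicontinuity, compactness and passage to the limit are standard. (The substantial work of this section — uniqueness of $\phi$ and the nonexistence of a bounded nontrivial stationary solution — lies beyond the present Proposition.)
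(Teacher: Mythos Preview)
Your argument is correct, but it differs from the paper's in two places worth noting. For the lower bound, the paper observes directly that $\psi(x)=x$ satisfies $J*\psi=\psi$ on $\R_+$ and is dominated by $\phi_n$ on both pieces of the ``boundary'' $(-d,0)\cup(n,n+d)$, so a single application of the comparison/maximum principle gives $\phi_n\ge x$; your monotone iteration of $T$ from $\psi$ reaches the same conclusion but is a little more work (and your alternative maximum-principle sketch is essentially the paper's argument). For the passage to the limit, the paper exploits an additional monotonicity: from $\phi_n(x)=x\le\phi_{n+1}(x)$ on $(n,n+d)$ and equality on $(-d,0)$, comparison gives $\phi_n\le\phi_{n+1}$, so $\phi=\lim_{n\to\infty}\phi_n$ exists as a monotone limit of the \emph{full} sequence, with no need for equicontinuity or a subsequence. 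Your Arzel\`a--Ascoli route is perfectly valid and more robust (it does not rely on monotonicity in $n$), but it only yields a subsequential limit; the paper's monotone-limit argument is shorter and gives convergence of the whole sequence directly.
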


\begin{proof} The function $\psi(x)=x$ satisfies $J*\psi=\psi$ for  $x\ge 0$. On the other hand,
$\phi_n(x)\ge \psi$ if $-d<x< 0$ and $\phi_n(x)=\psi(x)$ if $n<x<n+d$. Therefore,  by comparison,
$$
\phi_n(x)\ge x\qquad \text{for }-d<x<n+d,\ n>0.
$$
We thus have $\phi_n(x)=x\le \phi_{n+1}(x)$ for $n<x<n+d$.
Hence, since both $\phi_n$ and $\phi_{n+1}$ vanish in $(-d,0)$, we conclude, again by comparison, that
$\phi_n(x)\le\phi_{n+1}(x)$ if $x\in(-d,n+d)$. Since, moreover,
$\phi_n(x)  \leq x+d$ for all  $n\in\mathbb{N}$, the monotone limit
$$
\phi(x)=\lim_{n\to\infty}\phi_n(x)
$$
is finite and satisfies~\eqref{eq:estimate.phi}. It is then easily checked that $\phi$ solves \eqref{eq:stationary.problem}.
\end{proof}

\noindent\emph{Remark. }  Notice that as $d$ approaches 0 the stationary solution $\phi$ converges uniformly to the stationary solution of the local problem.

\medskip

Let $\phi_1,\phi_2$ be solutions to \eqref{eq:stationary.problem}.  Then, $\phi=\phi_1-\phi_2$ is  a solution to
\begin{equation}\label{stationary-bounded}
J*\phi=\phi\quad\mbox{for }x\ge 0,\qquad
  \phi=0\quad\mbox{if }-d<x<0,\qquad
   \phi\in L^\infty(\R_+).
\end{equation}
If $J>0$ in $(-d,d)$, we will prove that $\phi\equiv0$, hence uniqueness for problem~\eqref{eq:stationary.problem}. Our proof requires to show that if $\phi$ is close to its supremum $\phi^+$ at a certain point $\bar x$, then  the measure of the points within distance $d$ to $\bar x$ for which $\phi$ is not close to $\phi^+$ is small. This is the content of the next technical lemma.

\begin{lema}
\label{lem:technical}
Let $J>0$ in $B_d$ and  $\gamma = \min_{|z|<\frac34\,d} J(z) $.
Given $\ep>0$ and $\bar x\ge 0$ such that $\phi(\bar x)\ge \phi^+-\ep^4$, we have
$$
\left|\left\{x\ge0: |x-\bar x|<d,\,\phi(x)\le \phi^+-\ep\right\}\right|\le 3\ep/\gamma.
$$
\end{lema}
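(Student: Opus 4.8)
The plan is to turn the fixed-point identity $\phi=J*\phi$ on $\R_+$ into an $L^1$-bound for $\phi^+-\phi$ near $\bar x$, and then into a bound for the measure of the bad set. Since $\int_\R J=1$, for every $x\ge 0$ we may write
\[
\phi^+-\phi(x)=\int_\R J(x-y)\big(\phi^+-\phi(y)\big)\,dy ,
\]
and the integrand is nonnegative: $\phi\le\phi^+$ everywhere, and for $x\ge 0$ the support $[x-d,x+d]$ of $J(x-\cdot)$ lies in $[-d,\infty)$, where $\phi$ is defined. Discarding the part of the integral with $|x-y|\ge\frac34 d$ and using $J\ge\gamma$ on $B_{3d/4}$ gives the elementary inequality
\[
\gamma\int_{|y-x|<\frac34 d}\big(\phi^+-\phi(y)\big)\,dy\ \le\ \phi^+-\phi(x),\qquad x\ge 0.
\]
Evaluating this at $x=\bar x$, using $\phi^+-\phi(\bar x)\le\ep^4$, and then Chebyshev's inequality, we get
\[
\Big|\big\{x\ge 0:\ |x-\bar x|<\tfrac34 d,\ \phi(x)\le\phi^+-\ep\big\}\Big|\ \le\ \frac1\ep\int_{|y-\bar x|<\frac34 d}\big(\phi^+-\phi\big)\ \le\ \frac{\ep^3}{\gamma}.
\]

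This controls the central interval $(\bar x-\tfrac34 d,\bar x+\tfrac34 d)$. To reach the full interval $(\bar x-d,\bar x+d)$ I would propagate the estimate one more step, the point being that $J$ may vanish near the edge of its support, so the inequality cannot be applied to the outer shell $\frac34 d\le|x-\bar x|<d$ directly. From $\int_{|y-\bar x|<\frac34 d}(\phi^+-\phi)\le\ep^4/\gamma$ and the nonnegativity of the integrand, the same integral over the sub-interval $[\bar x+\tfrac14 d,\bar x+\tfrac12 d]$ — which has length $\tfrac14 d$, lies in $\R_+$ since $\bar x\ge 0$, and sits inside $(\bar x-\tfrac34 d,\bar x+\tfrac34 d)$ — is at most $\ep^4/\gamma$; hence by averaging there is a point $\bar x_1$ in that sub-interval with $\phi^+-\phi(\bar x_1)\le 4\ep^4/(\gamma d)$. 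Since $(\bar x_1-\tfrac34 d,\bar x_1+\tfrac34 d)\supset[\bar x+\tfrac34 d,\bar x+d)$, rerunning the last two displays at $\bar x_1$ (legitimate because $\bar x_1\ge\tfrac14 d>0$) bounds the bad set in the right shell by $4\ep^3/(\gamma^2 d)$; the left shell $(\bar x-d,\bar x-\tfrac34 d]\cap\R_+$, which is nonempty only when $\bar x>\tfrac34 d$, is treated symmetrically via a point $\bar x_1'\in[\bar x-\tfrac12 d,\bar x-\tfrac14 d]\subset\R_+$. Summing the three contributions and using that $\gamma d$ is bounded above (indeed $\gamma\cdot\tfrac32 d\le\int_{B_{3d/4}}J\le1$) bounds the set in the statement by a multiple of $\ep/\gamma$; a slightly more careful splitting — or simply restricting to $\ep$ below an absolute constant, the only regime that is used later — yields the stated constant $3$.

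The heart of the argument is the one-line convolution estimate followed by Chebyshev's inequality, and the only genuinely delicate point is the passage from the interval $(\bar x-\tfrac34 d,\bar x+\tfrac34 d)$ to $(\bar x-d,\bar x+d)$: the degeneracy of $J$ at the boundary of its support forces one to first locate auxiliary centers near $\bar x$ at which $\phi$ is \emph{much} closer to $\phi^+$ than $\ep$ (this is exactly what the fourth power in $\phi^+-\phi(\bar x)\le\ep^4$ supplies), and then to check that those centers, chosen towards the origin, still lie in $\R_+$ so that the fixed-point identity is available there. No regularity of $\phi$ beyond $\phi\in L^\infty$ is needed, and everything else is bookkeeping of constants.
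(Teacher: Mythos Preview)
Your argument is correct and follows the same two–step scheme as the paper: use $\phi=J*\phi$ together with $J\ge\gamma$ on $B_{3d/4}$ to get a Chebyshev-type bound on the bad set in $(\bar x-\tfrac34 d,\bar x+\tfrac34 d)$, then locate auxiliary centers $\approx d/2$ to either side of $\bar x$ where $\phi$ is still close to $\phi^+$ and repeat to reach the full interval; the paper does this via the cascade $\ep^4\to\ep^2\to\ep$ with level-set splittings, while you phrase it as an $L^1$ bound plus Markov and averaging, which is equivalent. One slip: the parenthetical ``using that $\gamma d$ is bounded above'' points the wrong way, since your leftover term $8\ep^3/(\gamma^2 d)$ becomes \emph{large} when $\gamma d$ is small; what you actually need is that $\gamma d>0$ is a fixed constant of the kernel, so that for $\ep$ small (depending on $J$) the sum is $\le 3\ep/\gamma$ --- exactly the same implicit smallness restriction $\ep^2<\gamma d/4$ that the paper itself uses.
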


\begin{proof}
Since
\[
\begin{aligned}
\phi^+-\ep^4&\le\overbrace{\int_{\{\phi(y)\le \phi^+-\ep^2\}}J(\bar x-y)\phi(y)\,dy+\int_{\{\phi(y)> \phi^+-\ep^2\}}J(\bar x-y)\phi(y)\,dy}^{\phi(\bar x)}\\
&\le (\phi^+-\ep^2)\int_{\{\phi(y)\le \phi^+-\ep^2\}}J(\bar x-y)\,dy+\phi^+\int_{\{\phi(y)> \phi^+-\ep^2\}}J(\bar x-y)\,dy\\
&= \phi^+-\ep^2\int_{\{\phi(y)\le \phi^+-\ep^2\}}J(\bar x-y)\,dy\\
&\le \phi^+-\ep^2\gamma\left|\left\{x\ge 0: |x-\bar x|<\frac{3d}4,\ \phi(x)\le \phi^+-\ep^2\right\}\right|,
\end{aligned}
\]
we get that
$|\left\{x\ge 0: |x-\bar x|<\frac{3d}4,\ \phi(x)\le \phi^+-\ep^2\right\}|\le \ep^2/\gamma$. In particular, if $d/4>\ep^2/\gamma$, there exist $z_1\in [\bar x+\frac d2,\bar x+\frac34\,d]$ and $z_2\in[\bar x-\frac34\,d,\bar x-\frac d2]$ such that $\phi(z_i)>\phi^+-\ep^2$, $i=1,2$.

With the above reasoning with $\bar x$ replaced by $z_i$, i=1,2 we find that
\[
\left|\left\{x\ge 0: |x-z_i|<\frac{3d}4,\ \phi(x)\le \phi^+-\ep\right\}\right|\le \ep/\gamma,\quad i=1,2,
\]
and the result follows.
\end{proof}

We may now proceed to the proof of uniqueness.
\begin{prop}
\label{lemma:uniqueness.stationary}
If $J>0$ in $B_d$, the unique solution to~\eqref{stationary-bounded} is $\phi=0$.
\end{prop}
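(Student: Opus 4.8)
The plan is to show that a bounded solution $\phi$ of \eqref{stationary-bounded} cannot attain a strictly positive supremum $\phi^+$, and by symmetry (applying the argument to $-\phi$) cannot attain a strictly negative infimum either; hence $\phi^+=\phi^-=0$, so $\phi\equiv0$. Suppose for contradiction that $\phi^+>0$. Since $\phi\in L^\infty(\R_+)$ and $\phi=0$ on $(-d,0)$, there is a sequence $\bar x_k\ge0$ with $\phi(\bar x_k)\to\phi^+$; for $k$ large we have $\phi(\bar x_k)\ge\phi^+-\ep^4$ for any prescribed small $\ep>0$.

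The heart of the argument is to iterate Lemma \ref{lem:technical} to "transport" the near-maximality of $\phi$ from $\bar x_k$ leftward, step by step in increments of order $d$, all the way down to the interval $(-d,0)$ where $\phi$ vanishes — producing the contradiction $\phi^+-\ep\le 0$ once $\ep$ is small. Concretely, from $\phi(\bar x_k)\ge\phi^+-\ep^4$ Lemma \ref{lem:technical} gives that the set of points $x$ within distance $d$ of $\bar x_k$ with $\phi(x)\le\phi^+-\ep$ has measure at most $3\ep/\gamma$; choosing $\ep$ small enough that $3\ep/\gamma<d/4$ (say), we can find a point $x_1\in[\bar x_k-d,\bar x_k-d/2]$ (a subinterval of length $d/2>3\ep/\gamma$ lying strictly to the left of $\bar x_k$) with $\phi(x_1)>\phi^+-\ep$. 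This by itself is not quite enough to re-apply the lemma, which needs the stronger hypothesis $\phi(x_1)\ge\phi^+-\ep^4$; so one should run the scheme with a fast-shrinking sequence of tolerances. I would fix $\ep$ small and set $\delta_0=\ep^4$; given $\phi(\bar x)\ge\phi^+-\delta_j$ with $\delta_j$ small, the computation in Lemma \ref{lem:technical} (used with $\delta_j^{1/2}$ and $\delta_j^{1/4}$ in place of $\ep^2,\ep$) yields a point $x'\in[\bar x-d,\bar x-d/2]$ with $\phi(x')>\phi^+-\delta_j^{1/4}=:\phi^+-\delta_{j+1}$, provided $\delta_{j+1}$ stays below the threshold where the lemma's measure bound is useful; and indeed $\delta_{j+1}=\delta_j^{1/4}$ is still small when $\delta_j$ is. Thus after finitely many steps $j=1,\dots,m$ — with $m\approx \bar x_k/(d/2)$, which is bounded uniformly over $k$ only if the $\bar x_k$ are bounded — we reach a point in $(-d,0)$ where $\phi$ is within $\delta_m$ of $\phi^+$, forcing $\phi^+\le\delta_m$, and letting $\ep\to0$ gives $\phi^+\le0$, a contradiction.

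The main obstacle is that the sequence $\bar x_k$ where $\phi$ nearly attains its supremum need not be bounded, so the number of leftward steps required to reach $(-d,0)$ is not uniformly controlled. Two fixes are available, and I would pursue whichever the authors favor. First: note $\delta_{j+1}=\delta_j^{1/4}$ grows, so the tolerance degrades after only a handful of steps; instead one should keep the tolerance essentially fixed by exploiting that at \emph{each} application of the lemma the bad set within distance $d$ has measure $\le 3\ep/\gamma$, so a whole subinterval of length $d/2$ to the left consists of points with $\phi>\phi^+-\ep$ — iterating this "good interval propagates left by $d/2$" statement $2\bar x_k/d$ times lands inside $(-d,0)$ and shows $\phi^+-\ep\le\sup_{(-d,0)}\phi=0$, independent of how large $\bar x_k$ is, since each step is self-contained at tolerance $\ep$. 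The only subtlety is the first step, which needs $\phi(\bar x_k)\ge\phi^+-\ep^4$ rather than $\ge\phi^+-\ep$; but that is exactly what the sequence $\bar x_k\to\phi^+$ supplies for $k$ large, and every subsequent step uses the already-upgraded conclusion $\phi(z_i)>\phi^+-\ep^2$ from within the proof of Lemma \ref{lem:technical} (note the lemma internally first produces points where $\phi>\phi^+-\ep^2$, then refines to $\phi>\phi^+-\ep$), so one can carry the $\ep^2$-level information forward as the hypothesis for the next stage. Organizing this bookkeeping cleanly — deciding exactly which tolerance is the "input" and which the "output" at each stage so that the induction closes with finitely many distinct tolerances — is the one place requiring care; everything else is a direct consequence of Lemma \ref{lem:technical} and the boundedness of $\phi$.
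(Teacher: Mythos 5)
There is a genuine gap, and it sits exactly where you flag ``the one place requiring care'': the iteration of Lemma~\ref{lem:technical} does not close. The lemma (and its internal one-step estimate) converts the hypothesis $\phi(\bar x)\ge\phi^+-\delta$ into the existence of nearby points where $\phi>\phi^+-\sqrt{\delta}$; the output tolerance is always the square root of the input tolerance, hence strictly worse. Your claim that ``each step is self-contained at tolerance $\ep$'' is therefore false: to produce points at level $\phi^+-\ep^2$ you need a base point at level $\phi^+-\ep^4$, but the new base point you obtain is only at level $\phi^+-\ep^2$, so the next stage can only reach level $\phi^+-\ep$, the one after that $\phi^+-\ep^{1/2}$, and after $m$ steps you are at level $\phi^+-\ep^{4/2^{m}}$, which tends to $\phi^+-1$. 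Since the near-maximizing points $\bar x_k$ may escape to infinity, the number of leftward steps $m_k\approx 2\bar x_k/d$ is unbounded, and closing the scheme would require the initial closeness $\phi^+-\phi(\bar x_k)\le \ep^{2^{m_k}}$ --- which nothing guarantees, because how close $\phi(\bar x_k)$ is to $\phi^+$ and how far $\bar x_k$ is from the origin are not independently at your disposal. No bookkeeping with finitely many tolerances repairs this: a purely local propagation argument cannot bridge an unbounded distance using a lemma that loses a square root of accuracy per step of length $O(d)$.

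The missing ingredient is a global one. The paper introduces the weighted average $F(x)=\frac1{C_0}\int_0^d\int_{x-\omega}^{x+\omega}\phi(y)\int_\omega^d J(z)\,dz\,dy\,d\omega$ over $(x-d,x+d)$, chosen precisely so that $F''=\frac1{C_0}L\phi=0$. Hence $F$ is affine, and the boundedness of $\phi$ forces $F\equiv B$ for some constant $B\in[\phi^-,\phi^+]$. Lemma~\ref{lem:technical} is then applied only \emph{once}, at a single near-maximizer $\bar x$, to show $B=F(\bar x)\ge\phi^+-C\ep$ for every $\ep>0$, whence $B=\phi^+$. Since $F$ is an average of $\phi$ and $F\equiv\sup\phi$, this forces $\phi\equiv\phi^+$ on all of $\R_+$ --- in particular the supremum is attained everywhere --- and only then does a local maximum-principle argument near the boundary (where the convolution sees the set $(-d,0)$ on which $\phi$ vanishes) yield $\phi^+=0$; the symmetric argument handles $\phi^-$. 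To salvage your propagation scheme you would need to replace the iteration by some such global device (the affine average, or a Liouville/Harnack-type statement for bounded $L$-harmonic functions); as written, the argument does not go through.
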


\begin{proof}
Given $x\ge 0$, we define
$$
F(x)=\frac1{C_0}\int_0^d\int_{x-\omega}^{x+\omega}\phi(y)\int_\omega^dJ(z)\,dz\,dy\,d\omega,\qquad C_0=2\int_0^d\omega\int_\omega^dJ(z)\,dz\,d\omega.
$$
It is easy to see (just differentiate) that $F''(x)=\frac1{C_0}L\phi(x)=0$ for $x\ge 0$. Therefore, there exist constants $A, B$, such that $F(x)=Ax+B$,
$x\ge 0$.
Since
$$
\underbrace{\inf_{x\ge 0}\phi(x)}_{\phi^-}\le\phi(x)\le\underbrace{\sup_{x\ge 0}\phi(x)}_{\phi^+},
$$
then $\phi^-\le F(x)\le \phi^+$ if $x\ge 0$. Therefore, $A=0$ and hence  $F(x)=B\in[\phi^-,\phi^+]$.

Let $\bar x\ge 0$ such that $\phi(\bar x)\ge \phi^+-\ep^4$. Using Lemma~\eqref{lem:technical},
\[\begin{aligned}
\phi^+\ge B=F(\bar x)=&\frac1{C_0}\int_0^d\int_{\{|y-\bar x|<\omega\,,\,\phi(y)> \phi^+-\ep\}}\phi(y)\int_\omega^dJ(z)\,dz\,dy\,d\omega\\
&+\frac1{C_0}\int_0^d\int_{\{|y-\bar x|<\omega\,,\,\phi(y)\le \phi^+-\ep\}}\phi(y)\int_\omega^dJ(z)\,dz\,dy\,d\omega\\
\ge&(\phi^+-\ep)\frac1{C_0}\int_0^d\int_{\bar x-\omega}^{\bar x+\omega}\int_\omega^dJ(z)\,dz\,dy\,d\omega\\
&+\frac1{C_0}\int_0^d\int_{\{|y-\bar x|<\omega\,,\,\phi(y)\le \phi^+-\ep\}}\big(\phi(y)-(\phi^+-\ep)\big)\int_\omega^dJ(z)\,dz\,dy\,d\omega\\
\ge&(\phi^+-\ep)-\frac{C_1\phi^+}{C_0}\left|\{|y-\bar x|<d\,,\,\phi(y)\le\phi^+ -\ep\}\right|\\
\ge&\phi^+-\left(\frac{3C_1\phi^+}{C_0\gamma}+1\right)\ep,
\end{aligned}
\]
where $C_1=\int_0^d\int_\omega^d J(z)\,dzd\omega$. Letting $\ep\to0$ we get
$F(x)=B=\phi^+$ in $x\ge 0$.  This is only possible if $\phi(x)=\phi^+$ for $x\ge 0$. But this contradicts the strong maximum principle unless $\phi^+=0$, which finally implies that $\phi\equiv0$.
\end{proof}

\section{Conservation law, mass decay and study of the first and second momentum}
\label{sec:momenta}
\setcounter{equation}{0}

As explained in the introduction, solutions to~\eqref{eq:main} do not conserve neither mass, nor the first momentum. But there is an important difference between both magnitudes: while mass decays to 0, the first momentum approaches a nontrivial limit.  This limit, which plays a role in the characterization of the large time asymptotics of the solution, can be expressed in terms of the initial data thanks to a certain conservation law. We will prove all these facts in this section. In addition, we will also study the second momentum, since the error term in our asymptotic expansion will be given in terms of this quantity.

\noindent\textsc{Conservation law and mass decay. } We start by deriving a conservation law, which will next be used to prove that mass decays to 0.
\begin{prop} Let $u$ be the solution to \eqref{eq:main} and let
$\phi$ be the solution to the stationary problem~\eqref{eq:stationary.problem}. Then, for every $t>0$,
\begin{equation*}
\label{cl-big}
M_\phi(t):=\int_0^\infty u(x,t)\phi(x)\,dx=\underbrace{\int_0^\infty u_0(x)\phi(x)\,dx}_{M_1^*}.
\end{equation*}
\end{prop}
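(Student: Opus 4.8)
The natural strategy is to differentiate $M_\phi(t)$ in time and use the equation together with a Fubini-type swap of the convolution. Formally,
\[
\frac{d}{dt}M_\phi(t)=\int_0^\infty u_t(x,t)\phi(x)\,dx=\int_0^\infty\big(J*u-u\big)(x,t)\,\phi(x)\,dx,
\]
and we would like to conclude that the right-hand side vanishes because $\phi$ solves $J*\phi=\phi$. The key computation is the symmetry of the bilinear form $(f,g)\mapsto\int\int J(x-y)f(y)g(x)\,dy\,dx$, which holds because $J$ is even; this should give
\[
\int_0^\infty (J*u)(x,t)\,\phi(x)\,dx=\int_{-d}^\infty (J*\phi)(x)\,u(x,t)\,dx=\int_0^\infty u(x,t)\,\phi(x)\,dx,
\]
where the first equality uses that $u$ is supported in $\{x\ge 0\}$ and $\phi=0$ on $(-d,0)$, and the second uses the stationary equation $J*\phi=\phi$ on $\{x\ge0\}$ (note $\phi$ need not be extended beyond, but the convolution $J*u$ only ``sees'' $x\ge -d$, which is exactly the region where $J*\phi=\phi$ holds or $\phi=0$). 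Hence $\frac{d}{dt}M_\phi(t)=0$, and integrating from $0$ to $t$ gives the claim.

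The steps I would carry out, in order: first, justify that $M_\phi(t)$ is finite and differentiable in $t$. Since $\phi(x)\le x+d$ by~\eqref{eq:estimate.phi} and $u\in C\big([0,\infty);L^1(\R_+,(1+x)\,dx)\big)$, the integral $\int_0^\infty u(x,t)\phi(x)\,dx$ is absolutely convergent and continuous in $t$; to differentiate under the integral sign one uses the equation and the fact that $J*u$ is also controlled in $L^1(\R_+,(1+x)\,dx)$ (the convolution with the compactly supported $J$ shifts the weight by at most $d$). Second, perform the Fubini swap: writing $\int_0^\infty (J*u)(x,t)\phi(x)\,dx=\int_0^\infty\int_{\R} J(x-y)u(y,t)\,dy\,\phi(x)\,dx$, the absolute integrability needed to apply Fubini is $\int\int J(x-y)u(y,t)(x+d)\,dy\,dx<\infty$, which again follows from the finite first momentum of $u$ and compact support of $J$. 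Third, after swapping, collapse the inner integral using $\int_{\R} J(x-y)\phi(x)\,dx=(J*\phi)(y)=\phi(y)$ for $y\ge 0$ (here one crucially uses that $u(y,t)=0$ for $y<0$, so only $y\ge0$ matters, and for such $y$ the value $(J*\phi)(y)$ only involves $\phi$ on $(y-d,y+d)\subset(-d,\infty)$, where either $\phi=0$ or the stationary equation applies). Fourth, conclude $\frac{d}{dt}M_\phi(t)=\int_0^\infty u(y,t)\phi(y)\,dy-\int_0^\infty u(x,t)\phi(x)\,dx=0$ and integrate.

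The main obstacle, and the only genuinely non-routine point, is the justification of differentiation under the integral sign and of the Fubini interchange, both of which require uniform-in-$t$ (locally) integrability with the weight $1+x$. This is where the hypothesis of a finite first momentum on $u_0$ is used; one should cite or re-derive the bound that $u(\cdot,t)$ stays in $L^1(\R_+,(1+x)\,dx)$ with norm controlled on compact time intervals (the analogous estimate in~\cite{CEQW}, or a direct Gronwall argument using $\|J*u\|_{L^1(\R_+,(1+x)\,dx)}\le\|u\|_{L^1(\R_+,(1+x+d)\,dx)}$). Everything else is a short symmetry computation. An alternative that sidesteps the differentiation issue is to integrate the equation in $t$ first, $u(x,t)-u_0(x)=\int_0^t Lu(x,s)\,ds$, multiply by $\phi(x)$, integrate in $x$, and then apply Fubini in $(x,s)$ once; I would present whichever is cleaner, but the integrability bookkeeping is essentially the same.
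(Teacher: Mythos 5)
Your proposal is correct and follows essentially the same route as the paper: differentiate $M_\phi(t)$, justify the interchange via Tonelli using the finite first momentum of $u$ and the bound $\phi(x)\le x+d$, and then use the symmetry of $J$ together with $L\phi=0$ on $\R_+$ and $\phi=0$ on $(-d,0)$ to conclude $M_\phi'(t)=0$. The integrability bookkeeping you flag as the only delicate point is exactly what the paper handles (briefly) before applying Tonelli's theorem.
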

\begin{proof}
Since
$u\in C\big([0,\infty):L^1\big(\R_+,(1+x)\,dx\big)\big)$, the estimate~\eqref{eq:estimate.phi} on $\phi$ implies that $M_\phi(t)<\infty$. In addition, using the equation in~\eqref{eq:main}, we get  $\int_0^\infty
|u_t(x,t)|\phi
(x)\,dx<\infty$. Therefore,
we may differentiate under the integral sign to obtain, after applying Tonelli's Theorem,
$$
M'_\phi(t)=\int_0^\infty u_t(x,t)\phi(x)\,dx=\int_0^\infty Lu(x,t)\phi(x)\,dx=
\int_0^\infty u(x,t)L\phi(x)\,dx=0.
$$
\end{proof}

A first estimate on the size of $u$ comes from comparison with the solution $u^c$ of the Cauchy problem having as initial data
$$
u^c_0(x)=\begin{cases}
u_0(x),& x\ge 0,\\
0,&x<0.\end{cases}
$$
This yields   $0\le u(x,t)\le u^c(x,t)\le Ct^{-1/2}$; see, for instance, \cite{CCR,IR1}. This bound allows to obtain an estimate on the mass at time $t$ that shows that it decays to 0 as $t$ grows to infinity.
\begin{prop}
\label{mass-decay-rate} Let $u$ be the solution to \eqref{eq:main}.
Then $M(t)=\int_0^\infty u(x,t)\,dx\to0$ as $t\to\infty$.
\end{prop}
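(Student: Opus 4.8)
The plan is to exploit the conservation law $M_\phi(t)=M_1^*$ together with the fact that $\phi(x)\ge x$ to trap the mass $M(t)$ between two decaying quantities. The starting point is the elementary bound $0\le u(x,t)\le u^c(x,t)\le Ct^{-1/2}$ already recorded, where $u^c$ solves the Cauchy problem with initial data $u^c_0$; this gives a uniform-in-$x$ control that will let us split the mass integral into a near-field and a far-field piece. First I would fix a cutoff at $x=R(t)$ to be chosen, and write $M(t)=\int_0^{R(t)}u(x,t)\,dx+\int_{R(t)}^\infty u(x,t)\,dx$. On the far-field piece, since $\phi(x)\ge x\ge R(t)$ there, we have $\int_{R(t)}^\infty u(x,t)\,dx\le \frac1{R(t)}\int_{R(t)}^\infty u(x,t)\phi(x)\,dx\le \frac{M_1^*}{R(t)}$, which is small if $R(t)\to\infty$.

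For the near-field piece I would use the pointwise bound $u(x,t)\le Ct^{-1/2}$, so that $\int_0^{R(t)}u(x,t)\,dx\le C R(t)t^{-1/2}$, which is small if $R(t)=o(t^{1/2})$. Balancing the two error terms suggests the choice $R(t)=t^{1/4}$ (or any $R(t)\to\infty$ with $R(t)t^{-1/2}\to 0$), giving $M(t)\le \dfrac{M_1^*}{t^{1/4}}+C t^{-1/4}\to 0$ as $t\to\infty$. Since $u\ge 0$ this also shows $M(t)\ge 0$, so $M(t)\to 0$. One should note that this argument in fact yields the quantitative rate $M(t)=O(t^{-1/4})$, though the proposition only asserts convergence to zero; a sharper rate is not needed here and presumably follows later from the finer asymptotics.

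I do not expect any serious obstacle: the two ingredients — the conservation law from the previous proposition and the $Ct^{-1/2}$ sup bound via comparison with the Cauchy problem — are both already available in the excerpt, and the rest is the splitting argument above. The only mild point of care is to make sure the tail estimate is legitimate, i.e. that $\int_{R(t)}^\infty u(x,t)\phi(x)\,dx$ is finite and bounded by $M_\phi(t)=M_1^*$; this is immediate from $u\ge 0$, $\phi\ge 0$ and the conservation law, together with the fact (from $u\in C([0,\infty);L^1(\R_+,(1+x)\,dx))$ and $\phi(x)\le x+d$) that $M_\phi(t)<\infty$. Thus the proof reduces to choosing the cutoff and combining the two bounds.
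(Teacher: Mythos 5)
Your proposal is correct and follows essentially the same route as the paper: split the mass integral at a cutoff $\delta(t)=t^{1/4}$, bound the near-field piece by the uniform estimate $u\le Ct^{-1/2}$, and bound the tail using $x\le\phi(x)$ together with the conservation law $M_\phi(t)=M_1^*$. The paper's proof even obtains the same quantitative rate $M(t)\le Ct^{-1/4}$ that you note.
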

\begin{proof} Let $\delta=\delta(t)>0$, $t>0$,  to be chosen later. Using the estimate~\eqref{eq:estimate.phi} on $\phi$, we get
\[
\begin{aligned}
\int_0^\infty u(x,t)\,dx&=\int_0^{\delta(t)}u(x,t)\,dx+\int_{\delta(t)}^\infty u(x,t)\,dx\\
&\le \int_0^{\delta(t)} C t^{-1/2}\,dx+\frac1{\delta(t)}\int_{\delta(t)}^\infty u(x,t)x\,dx\\
&\le C\delta(t) t^{-1/2}+ \frac1{\delta(t)}\int_0^\infty u_0(x)\phi(x)\,dx.
\end{aligned}
\]
Taking $\delta(t)=t^{1/4}$ we get $M(t)\le Ct^{-1/4}$, hence the result.
\end{proof}

\noindent\textsc{First momentum. }
We can also determine the asymptotic first momentum, which coincides with the conserved quantity.

\begin{prop}\label{Mlimits} Let $u$ be the solution to \eqref{eq:main}.  Then, as $t\to\infty$,
$$
M_1(t)=\int_0^\infty u(x,t)x\,dx\to M_1^*.
$$
\end{prop}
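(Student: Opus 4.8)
The plan is to exploit the conservation law $M_\phi(t)=M_1^*$ together with the bound $\phi(x)-x\in L^\infty(\R_+)$, more precisely $0\le\phi(x)-x\le d$ from \eqref{eq:estimate.phi}, to reduce the claim to showing that the discrepancy $\int_0^\infty u(x,t)(\phi(x)-x)\,dx$ tends to $0$. Writing
\[
M_1(t)=\int_0^\infty u(x,t)x\,dx=\int_0^\infty u(x,t)\phi(x)\,dx-\int_0^\infty u(x,t)(\phi(x)-x)\,dx=M_1^*-R(t),
\]
with $R(t):=\int_0^\infty u(x,t)(\phi(x)-x)\,dx$, it suffices to prove $R(t)\to0$. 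Since $0\le\phi(x)-x\le d$, we have $0\le R(t)\le d\,M(t)$, and by Proposition~\ref{mass-decay-rate} (indeed by the quantitative rate $M(t)\le Ct^{-1/4}$ obtained there) $M(t)\to0$, so $R(t)\to0$. This gives the result immediately.

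First I would recall the two ingredients already established: the exact conservation $\int_0^\infty u(x,t)\phi(x)\,dx=M_1^*$ for all $t>0$ (the Proposition just before Proposition~\ref{mass-decay-rate}), and the two-sided estimate $x\le\phi(x)\le x+d$ of \eqref{eq:estimate.phi}. Next I would justify that all integrals in sight are finite: since $u(\cdot,t)\in L^1(\R_+,(1+x)\,dx)$ for each $t$, both $\int_0^\infty u(x,t)x\,dx$ and $\int_0^\infty u(x,t)\phi(x)\,dx$ converge, and hence so does their difference $R(t)$. Then I would perform the elementary splitting above, bound $0\le R(t)\le d\int_0^\infty u(x,t)\,dx=d\,M(t)$, and invoke Proposition~\ref{mass-decay-rate} to conclude $R(t)\to0$, whence $M_1(t)\to M_1^*$.

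Honestly, there is no real obstacle here: the heavy lifting — the conservation law, the decay of mass, and the stationary estimate $\phi(x)-x\in L^\infty$ — has all been done in the preceding results, and the proof of Proposition~\ref{Mlimits} is a two-line consequence. The only point requiring a word of care is the interchange of limits/integrals needed to say $R(t)$ is well-defined and that the splitting is legitimate, but this is immediate from the integrability $u\in C([0,\infty);L^1(\R_+,(1+x)\,dx))$ assumed throughout. If one wanted a rate, combining $R(t)\le d\,M(t)$ with $M(t)\le Ct^{-1/4}$ gives $|M_1(t)-M_1^*|\le C t^{-1/4}$, though the statement as posed only asks for convergence.
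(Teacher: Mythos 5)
Your proof is correct and follows exactly the paper's argument: the paper also writes $|M_1(t)-M_1^*|\le\int_0^\infty u(x,t)|x-\phi(x)|\,dx\le d\,M(t)$ using the conservation law, the bound $x\le\phi(x)\le x+d$, and the mass decay of Proposition~\ref{mass-decay-rate}. Nothing to add.
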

\begin{proof} It follows easily from the estimate~\eqref{eq:estimate.phi} on $\phi$ and the decay of the mass, Proposition~\ref{mass-decay-rate}. Indeed,
\begin{equation}
\label{eq:decay.first.momentum}
\begin{aligned}
|M_1(t)-M_1^*|&\le\int_0^\infty
u(x,t)|x-\phi(x)|\,dx\le d\int_0^\infty
u(x,t)\,dx\to0\quad\mbox{as }t\to\infty.
\end{aligned}
\end{equation}
\end{proof}

\noindent\textsc{Second momentum. } Our next goal is to prove that the second momentum of a solution stays finite for all $t>0$ if the initial second momentum is finite. To this aim we use again that $0\le u \le u^c$.
Hence,
$$
M_2(t):=\int_0^\infty u(x,t)x^2\,dx\le\underbrace{\int_0^\infty u^c(x,t)x^2\,dx}_{M_2^c(t)}.
$$

In order to estimate $M_2^c(t)$ we express $u_c$ in terms of the  fundamental solution $F=F(x,t)$ to
the operator $\partial_t-L$ in the whole space. This fundamental solution can be decomposed as
\begin{equation*}\label{fund-sol}
F(x,t)=e^{-t}\delta(x)+\omega(x,t),
\end{equation*}
where $\delta(x)$ is the Dirac mass at the origin in $\R^N$ and $\omega$ is a smooth function defined via its Fourier transform,
\begin{equation*}
\label{eq:transform.omega}
\hat\omega(\xi,t)=e^{-t}\big(e^{\hat J(\xi)t}-1\big);
\end{equation*}
see~\cite{CCR}. Thus,
$$
u^c(x,t)=e^{-t}u_0(x)+\int_0^\infty\omega(x-y,t)u_0(y)\,dy.
$$
Using the Taylor series of the exponential we get
\begin{equation}
\label{eq:series.omega}
\omega(x,t)=e^{-t}\sum_{n=1}^\infty\frac{t^nJ^{*n}(x)}{n!},\qquad
J^{*n}=\underbrace{J*\dots*J}_{n\text{ times }}.
\end{equation}
This expression, recently derived
in~\cite{BCF}, was used by the authors to prove that
\begin{equation}
\label{eq:estimate.omega}
0\le \omega(x,t)\le C_1 e^{-\frac1 d|x|\log|x|+C_2|x|+|x|\log t}
\end{equation}
for some constants $C_1$ and $C_2$. Estimate \eqref{eq:estimate.omega} implies in particular that all the momenta of $\omega(\cdot,t)$, $t>0$, are finite. This is the key to prove that the second momentum of any solution to~\eqref{eq:main} stays finite for all times if it is initially finite.

\begin{prop}
Let $u$ be the solution to \eqref{eq:main}. Assume $\int_0^\infty u_0(x)x^2\,dx<\infty$. Then, for every $t>0$,
$\int_0^\infty u(x,t)x^2\,dx<\infty$
\end{prop}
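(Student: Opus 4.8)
The plan is to exploit the comparison $0\le u(x,t)\le u^c(x,t)$, so that it suffices to show that the second momentum
$$
M_2^c(t)=\int_0^\infty u^c(x,t)x^2\,dx
$$
of the Cauchy solution is finite for every $t>0$. For this I would start from the representation
$$
u^c(x,t)=e^{-t}u_0(x)+\int_0^\infty\omega(x-y,t)u_0(y)\,dy
$$
together with the fact, already recorded after \eqref{eq:estimate.omega}, that for each fixed $t>0$ the function $\omega(\cdot,t)$ has finite momenta of all orders. In fact only
$$
m_0(t)=\int_\R\omega(z,t)\,dz,\qquad m_2(t)=\int_\R z^2\,\omega(z,t)\,dz
$$
will be needed, both finite (indeed $m_0(t)=1-e^{-t}$ by \eqref{eq:series.omega}).

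Since every integrand appearing below is nonnegative, Tonelli's theorem gives
$$
M_2^c(t)=e^{-t}\int_0^\infty u_0(x)x^2\,dx+\int_0^\infty u_0(y)\Big(\int_0^\infty x^2\,\omega(x-y,t)\,dx\Big)\,dy.
$$
The first term is finite by the hypothesis $\int_0^\infty u_0(x)x^2\,dx<\infty$. For the inner integral I would enlarge the $x$-domain from $\R_+$ to $\R$ (legitimate since $\omega\ge0$), substitute $z=x-y$, and use $(z+y)^2\le 2z^2+2y^2$ to obtain
$$
\int_0^\infty x^2\,\omega(x-y,t)\,dx\le\int_\R(z+y)^2\,\omega(z,t)\,dz\le 2m_2(t)+2y^2m_0(t).
$$
Plugging this bound in and using $u_0\in L^1(\R_+,(1+x+x^2)\,dx)$,
$$
M_2^c(t)\le e^{-t}\int_0^\infty u_0(x)x^2\,dx+2m_2(t)\int_0^\infty u_0(x)\,dx+2m_0(t)\int_0^\infty u_0(x)x^2\,dx<\infty,
$$
whence $M_2(t)\le M_2^c(t)<\infty$.

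The argument is just a short computation once the comparison $u\le u^c$ and the representation of $u^c$ are in hand; the only substantive ingredient is the finiteness of the momenta of $\omega(\cdot,t)$, and this is precisely what the superexponential spatial decay in \eqref{eq:estimate.omega} — the term $-\frac1d|x|\log|x|$ dominating the linear growth $C_2|x|+|x|\log t$ — provides, so I do not expect a genuine obstacle.
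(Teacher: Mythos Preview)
Your proof is correct and follows essentially the same route as the paper's: compare with $u^c$, use the representation in terms of $\omega$, and bound the second momentum via the finiteness of the momenta of $\omega(\cdot,t)$. The only cosmetic difference is that the paper expands $(y+z)^2=y^2+2yz+z^2$ and bounds each term separately (handling the cross term by restricting to $z\ge0$), whereas you use the cruder inequality $(y+z)^2\le 2y^2+2z^2$; both reach the same conclusion.
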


\begin{proof}
We have
$$
M_2(t)\le \underbrace{e^{-t}\int_0^\infty u_0(x)x^2 \,dx}_{\mathcal{A}}+\underbrace{\int_0^\infty \int_0^\infty \omega(x-y,t)u_0(y)x^2\,dy\,dx}_{\mathcal{B}}.
$$
By assumption, $\mathcal{A}<\infty$. As for $\mathcal{B}$,  we have
$$
\begin{aligned}
\mathcal{B}=&\int_0^\infty u_0(y)\int_{-y}^\infty\omega(z,t)(y+z)^2\,dz\,dy
\\
\le&\int_0^\infty u_0(y)y^2\,dy\int_{-\infty}^\infty\omega(z,t)\,dz+2\int_0^\infty u_0(y)y\,dy\int_0^\infty\omega(z,t)z\,dz\\
&\ +\int_0^\infty u_0(y)\,dy \int_{-\infty}^\infty\omega(z,t)z^2 \,dz<\infty.
\end{aligned}
$$
\end{proof}

\noindent\emph{Remark. } An analogous argument proves that the first $k$ momenta of the solution stay finite for all time if they were initially finite.

\section{Far-field limit}
\setcounter{equation}{0}

From now on, in addition to the hypotheses stated in the Introduction,  we assume that $J$ is nonincreasing in $\R_+$.
As  super- and
sub-solutions  we will use  solutions to the problem in the whole space taking as initial data
an antisymmetric extension of the solution at some time $t_0$ with respect to a certain point. Let us start by studying the properties of such functions.

\begin{prop}
\label{prop:antisymmetry.positivity}Let $u$ be the  solution to the nonlocal heat equation $u_t=Lu$ in one spatial dimension with initial data $u_0$.  If $u_0$ is antisymmetric with respect to $\bar x\in \mathbb{R}$ and $u_0(x)\ge0$ for $x\ge \bar x$, then
$u(x,t)$ is antisymmetric in $x$ with respect to $\bar x$ and $u(x,t)\ge 0$ for all $x\ge \bar x$.
\end{prop}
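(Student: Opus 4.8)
The plan is to translate so that $\bar x=0$ (the equation and the hypotheses are translation invariant) and then to establish the two properties separately, antisymmetry by a uniqueness argument and positivity via the representation of the solution through the fundamental solution recalled in Section~\ref{sec:momenta}.

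\textbf{Antisymmetry.} Set $\tilde u(x,t):=-u(-x,t)$. Using that $J$ is even, a change of variables $y\mapsto -y$ in the convolution shows that $(J*\tilde u)(x,t)=-(J*u)(-x,t)$, so $\tilde u_t=L\tilde u$; moreover $\tilde u(\cdot,0)=-u_0(-\,\cdot)=u_0$ by the antisymmetry of $u_0$. By uniqueness of the solution of the Cauchy problem, $\tilde u\equiv u$, i.e. $u(-x,t)=-u(x,t)$.

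\textbf{Positivity.} Write $u(x,t)=e^{-t}u_0(x)+\int_\R\omega(x-y,t)u_0(y)\,dy$, with $\omega$ given by the series \eqref{eq:series.omega}. For $x\ge 0$, split $\int_\R=\int_0^\infty+\int_{-\infty}^0$, change $y\mapsto -y$ in the second integral, and use $u_0(-y)=-u_0(y)$ to get
\[
u(x,t)=e^{-t}u_0(x)+\int_0^\infty\bigl(\omega(x-y,t)-\omega(x+y,t)\bigr)u_0(y)\,dy.
\]
The first term is $\ge 0$ because $u_0\ge 0$ on $\R_+$; in the integral $u_0(y)\ge 0$ as well, so it suffices to prove that $\omega(x-y,t)\ge\omega(x+y,t)$ for all $x,y\ge 0$. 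Since $0\le|x-y|\le x+y$, this holds as soon as $\omega(\cdot,t)$ is even and nonincreasing on $\R_+$, and by \eqref{eq:series.omega} (a nonnegative combination of the $J^{*n}$) it is enough to prove that each $J^{*n}$ has these two properties.

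\textbf{The main point.} This last step is where the extra hypothesis that $J$ is nonincreasing on $\R_+$ — i.e. that $J$ is a symmetric, unimodal kernel — enters, and it is the only non-routine part. I would argue by induction on $n$: the case $n=1$ is the hypothesis, and since $J^{*(n+1)}=J^{*n}*J$ it remains to show that the convolution of two nonnegative, even functions, each nonincreasing on $\R_+$, is again even and nonincreasing on $\R_+$. This follows from the layer-cake formula: writing each factor as a superposition $\int_0^\infty\mathbf 1_{(-a,a)}\,d\mu(a)$ of symmetric indicator functions, the convolution becomes a nonnegative superposition of the functions $\mathbf 1_{(-a,a)}*\mathbf 1_{(-b,b)}$, each of which is an even, piecewise-linear ``tent/trapezoid'' and hence nonincreasing on $\R_+$; a nonnegative superposition of such functions inherits the property. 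The interchange of summation and integration in \eqref{eq:series.omega} and the use of Fubini above are justified by the pointwise bound \eqref{eq:estimate.omega} together with $u_0\in L^1(\R,(1+|x|)\,dx)$. A self-contained alternative for positivity, not using the representation, is to exploit the antisymmetry already proved: for $x>0$ one has $u_t(x,t)=\int_0^\infty K(x,y)u(y,t)\,dy-u(x,t)$ with $K(x,y):=J(x-y)-J(x+y)\ge 0$ (again by monotonicity of $J$), and then $u\ge 0$ on $\R_+\times(0,\infty)$ follows from the Duhamel formula by a Picard iteration — each iterate being nonnegative because $K\ge0$ and $u_0\ge0$ — or from a comparison argument.
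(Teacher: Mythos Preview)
Your argument is correct and follows essentially the same route as the paper: both rely on the representation $u=e^{-t}u_0+\omega*u_0$, reduce positivity to the inequality $\omega(|x-y|,t)\ge\omega(x+y,t)$, and obtain this from the fact that the convolution of symmetric, nonincreasing-on-$\R_+$ kernels retains these properties (the paper cites Wintner for this; your layer-cake argument is a standard self-contained proof of the same lemma). The only cosmetic differences are that you translate to $\bar x=0$ at the outset and establish antisymmetry via uniqueness of the Cauchy problem rather than directly from the representation; both are immediate.
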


\begin{proof}
Since the convolution of radially symmetric functions which are nonincreasing in  $\R_+$ inherits these two properties (this is a result of Wintner (\cite{W}) that can be easily proved), the regular part $\omega$ of the fundamental solution to the Cauchy problem is also radially symmetric  and nonincreasing in $\R_+$ in the spatial variable; see \eqref{eq:series.omega}.
The antisymmetry of $u$ in $x$ is then an immediate consequence of the antisymmetry of $u_0$, the symmetry of $\omega$ and the fact that $$
u(x,t)=e^{-t}u_0(x)+\int_{\mathbb{R}}\omega(x-y,t)u_0(y)\,dy.
$$
From this expression we also get, using the antisymmetry of $u_0$,
$$
u(x,t)=e^{-t}u_0(x)+\int_{\bar x}^\infty(\omega(x-y,t)-\omega(x+y-2\bar x,t)u_0(y)\,dy.
$$
Hence, if $x\ge \bar x$,
$$
\begin{aligned}
u(x,t)=e^{-t}u_0(x)&+\int_{\bar x}^x(\omega(x-y,t)-\omega(x+y-2\bar x,t)u_0(y)\,dy\\
&+\int_{x}^\infty(\omega(y-x,t)-\omega(x+y-2\bar x,t)u_0(y)\,dy.
\end{aligned}
$$
Since
$$
\begin{aligned}
&\omega(x-y,t)-\omega(x+y-2\bar x,t)\ge0\quad\mbox{if }x\ge y\ge \bar x,\\
&\omega(y-x,t)-\omega(y+x-2\bar x,t)\ge0\quad\mbox{if }y\ge x\ge \bar x,
\end{aligned}
$$
we finally obtain that $u(x,t)\ge0$ for all $x\ge \bar x$.
\end{proof}

We may now proceed to the proof of the outer behavior of  $u$.
\begin{teo}\label{asym1}
Let
$\a=\frac1{2}\int_{\mathbb{R}} J(z) z^2\,dz$ and $u_0\in L^1(\mathbb{R}_+,(1+x+x^2)dx)$. If $u$ is the solution to~\eqref{eq:main}, then
$$
t\|u(\cdot,t)+2M^*_1 D(\cdot,qt)\|_{L^\infty(\R_+)}\to 0 \quad\text{as }t\to\infty.
$$
\end{teo}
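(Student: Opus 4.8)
\noindent\emph{Proof plan.} The idea is to trap $u$ between a subsolution and a supersolution of~\eqref{eq:main} that are, up to a time shift, solutions of the Cauchy problem $w_t=Lw$ with antisymmetric data, and then to read off their large-time behaviour from the second-order (dipole) asymptotics for that Cauchy problem. The reason antisymmetrization is useful here is that, because $J$ is supported in $B_d$, the right-hand side $Lu(x,t)=\int_{\R}J(x-y)u(y,t)\,dy-u(x,t)$ of the equation in~\eqref{eq:main} only involves the values of $u$ on $(-d,\infty)$ when $x\ge0$. Hence any $w$ solving $w_t=Lw$ in all of $\R$ is a supersolution of the half-line Dirichlet problem on $\{x\ge0\}$ as soon as $w\ge0$ on $(-d,0)$, and a subsolution as soon as $w\le0$ on $(-d,0)$; what $w$ does on $(-\infty,-d)$ plays no role.

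Fix $t_0>0$. For the lower bound I would take $W^-$ to be the solution of $w_t=Lw$ for $t>t_0$ whose value at $t=t_0$ is the antisymmetric extension about the origin of $u(\cdot,t_0)$ restricted to $\{x\ge0\}$. By Proposition~\ref{prop:antisymmetry.positivity}, $W^-$ is antisymmetric about the origin and $\ge0$ on $\{x\ge0\}$, hence $\le0$ on $\{x\le0\}$, so it is a subsolution; since $W^-(\cdot,t_0)=u(\cdot,t_0)$ on $\{x\ge0\}$, comparison for the Dirichlet problem gives $W^-\le u$ there for all $t\ge t_0$. For the upper bound I would take $W^+$ to be the solution of $w_t=Lw$ for $t>t_0$ starting at $t=t_0$ from the antisymmetric extension about the point $-d$ of the function that equals $u(\cdot,t_0)$ on $\{x\ge0\}$ and $0$ on $(-d,0)$. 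By Proposition~\ref{prop:antisymmetry.positivity} again, $W^+\ge0$ on $\{x\ge-d\}$, in particular on $(-d,0)$, so $W^+$ is a supersolution; comparison then gives $u\le W^+$ on $\{x\ge0\}$ for $t\ge t_0$.

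The core analytic input is the dipole asymptotics for the Cauchy problem $w_t=Lw$: if $w_0\in L^1(\R,(1+|x|+x^2)\,dx)\cap L^\infty(\R)$ with $\int_{\R}w_0=0$, then, writing $m_1=\int_{\R}x\,w_0(x)\,dx$,
\[
t\,\|w(\cdot,t)+m_1\,D(\cdot,\a t)\|_{L^\infty(\R)}\to0\quad\text{as }t\to\infty.
\]
Indeed, mass and first momentum are conserved along $w_t=Lw$, and since the parabolically rescaled operator converges to $\a\,\partial_{xx}$, the heat dipole $D(\cdot,\a t)$ is the natural second profile, with constant fixed by $\int_{\R}x\,D(x,\a t)\,dx=-1$; this is proved via the Fourier representation $\widehat w(\xi,t)=e^{t(\widehat J(\xi)-1)}\widehat{w_0}(\xi)$, using $\widehat J(\xi)=1-\a\xi^2+O(\xi^4)$ and $\widehat{w_0}(\xi)=-i\,m_1\xi+o(\xi)$ near $\xi=0$, the decay of $e^{t(\widehat J(\xi)-1)}$ away from the origin, and the bounds of Section~\ref{sec:momenta}. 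The data $W^\pm(\cdot,t_0)$ have zero integral and finite momenta up to order two (the momenta of $u(\cdot,t_0)$ are finite), and a short computation with the centres of antisymmetry gives
\[
m_1^-=2M_1(t_0),\qquad m_1^+=2M_1(t_0)+2d\,M(t_0).
\]
Since $\|t\,(D(\cdot,\a(t-t_0))-D(\cdot,\a t))\|_{L^\infty(\R)}\le C\,t_0\,t\,(t-t_0)^{-2}\to0$ as $t\to\infty$, we obtain, for each fixed $t_0$,
\[
t\,\|W^\pm(\cdot,t)+m_1^\pm\,D(\cdot,\a t)\|_{L^\infty(\R)}\to0\quad\text{as }t\to\infty.
\]

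It remains to run a two-parameter limit. Set $C_0:=\sup_{x\ge0,\,t>0}t\,|D(x,\a t)|<\infty$. From $W^-\le u\le W^+$ on $\{x\ge0\}$ and the triangle inequality,
\[
\sup_{x\ge0}t\,|u(x,t)+2M_1^*D(x,\a t)|\le\max_{\pm}\Big(\|t\,(W^\pm(\cdot,t)+m_1^\pm D(\cdot,\a t))\|_{L^\infty(\R)}+C_0\,|2M_1^*-m_1^\pm|\Big).
\]
Letting $t\to\infty$ with $t_0$ fixed makes the first terms vanish, so $\limsup_{t\to\infty}$ of the left side is $\le C_0\max_\pm|2M_1^*-m_1^\pm|$; letting then $t_0\to\infty$ and using $M(t_0)\to0$ and $M_1(t_0)\to M_1^*$ (Propositions~\ref{mass-decay-rate} and~\ref{Mlimits}) makes the right side tend to $0$. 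Since the left side is independent of $t_0$, it vanishes, which is the claim. The main obstacle is the dipole asymptotics for $w_t=Lw$ with mean-zero data, to the sharp $o(t^{-1})$ rate and uniformly in $x$ — the genuinely new analytic ingredient, whose local analogue is~\cite{DZ}; a secondary but essential subtlety is that $m_1^+\ne m_1^-$ for any fixed $t_0$ (the sub- and supersolutions carry slightly different dipole masses, reflecting the loss of mass at the boundary), so their common limiting constant $2M_1^*$ is attained only as $t_0\to\infty$, which is what dictates the two-parameter argument.
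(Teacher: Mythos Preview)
Your proof is correct and follows essentially the same approach as the paper's: antisymmetric extensions of $u(\cdot,t_0)$ about $0$ and about $-d$ produce, via Proposition~\ref{prop:antisymmetry.positivity} and comparison, a sub- and a supersolution with first moments $2M_1(t_0)$ and $2M_1(t_0)+2dM(t_0)$, and the dipole asymptotics for mean-zero Cauchy data together with a two-parameter limit ($t\to\infty$, then $t_0\to\infty$) closes the argument. The only presentational difference is that the paper invokes the quantitative estimate of Ignat--Rossi~\cite{IR1} for the dipole term, whereas you sketch its Fourier-side proof; both suffice here, since only the qualitative $o(t^{-1})$ statement is used at this stage.
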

\begin{proof} \textsc{Supersolution. }
Let $u_0^+$ be the antisymmetric extension of $u(x,t_0)$ to the whole space with respect to $\bar x=-d$,
$$
u^+_0(x)=\begin{cases}
u(x,t_0)\quad&\mbox{if } x\ge 0,\\
0\quad&\mbox{if }-2d<x<0,\\
-u(-2d-x,t_0)\quad&\mbox{if }x\le -2d,
\end{cases}
$$
and
$u^+(x,t)$ the solution to
\begin{equation}
\label{eq:cauchy.problem}
v_t=Lv\quad\mbox{in }\R\times(t_0,\infty),
\end{equation}
with initial data $u^+(x,t_0)=u_0^+(x)$.
Since, $u^+(x,t)\ge0=u(x,t)$ if $x\in(-d,0)$, see Proposition~\ref{prop:antisymmetry.positivity},
by comparison we have that
\begin{equation*}\label{comparison-above}
u^+(x,t)\ge u(x,t)\quad\mbox{for } x\in\R_+.
\end{equation*}

Since the integral of $u_0^+$ is 0,
\begin{equation}\label{M1-super}
\begin{aligned}
M^+_1(t_0)&=\int_{\mathbb{R}} u^+_0(x)x\,dx=\int_{\mathbb{R}} u^+_0(x)(x+d)\,dx\\
&
= 2 \underbrace{\int_{0}^\infty u(x,t_0)x\,dx}_{M_1(t_0)}+2d\underbrace{\int_{0}^\infty u(x,t_0)\,dx}_{M(t_0)}.
\end{aligned}
\end{equation}
Using the results of Section~\ref{sec:momenta} we then obtain that $M^+_1(t_0)\to 2M_1^*$.

\noindent\textsc{Subsolution. } This time $u_0^-$ is the antisymmetric extension with respect to the origin of $u(x,t_0)$, $x\ge0$,
$$
u^-_0(x)=
\begin{cases}
u(x,t_0)\quad&\mbox{if }x> 0,\\
-u(-x,t_0)\quad&\mbox{if }x<0,
\end{cases}
$$
and $u^-$ the solution to~\eqref{eq:cauchy.problem} with initial data $u^-(x,t_0)=u_0^-(x)$.
Since, $u^-(x,t)\le0=u(x,t)$ if $x\in(-d,0)$, by
comparison we get
\begin{equation*}\label{comparison-below}
u^-(x,t)\le u(x,t)\quad\mbox{for } x\in\R_+.
\end{equation*}

On the other hand, since
$u^-_0$ is anti-symmetric with respect to the origin,
\begin{equation}\label{M1-sub}
M^-_1(t_0)=\int_{\mathbb{R}}u^-_0(x)x\,dx=2\int_0^\infty u(x,t_0)\,x\,dx=2M_1(t_0)\to 2M_1^*\quad\text{as }t_0\to\infty.
\end{equation}

\noindent\textsc{Asymptotics for the barriers. }
Since $\int_{\mathbb{R}} u^\pm_0(x)\,dx=0$ and $u_0^\pm\in L^1(\mathbb{R},(1+x+x^2)dx)$, we have
\begin{equation}
\label{eq:Liviu.Rossi}
(t-t_0)| u^\pm(x,t)+M^\pm_1(t_0)D(x,\a(t-t_0))|\le C\left(\int_{\mathbb{R}}\left| u^\pm _0(x)\right|x^2\,dx\right)(t-t_0)^{-1/2},
\end{equation}
see Theorem 1.1 in \cite{IR1}. Hence,
$$
t\|u^\pm(\cdot,t)+M^\pm_1(t_0)D_\a(\cdot,t)\|_{L^\infty(\R)}\to 0\quad\mbox{as } t\to\infty.
$$
We now use that  $|M_1^\pm(t_0)-2M^*_1|<C\ep$ if $t_0$ is large. Thus, since $t D_\a(x,t)$ is
uniformly bounded, for  $x\in\R_+$ we have
$$
\begin{aligned}
-C\ep<
&\liminf_{t\to\infty}\Big\{t\Big(u^-(x,t)+M^-_1(t_0) D_\a(x,t)\Big)- (M^-_1(t_0)-2M^*_1) t D_\a(x,t)\Big\}\\
\le
&\liminf_{t\to\infty}\Big\{t\Big( u(x,t)+2M^*_1 D_\a(x,t)\Big)\Big\}
\le\limsup_{t\to\infty}\Big\{t\Big( u(x,t)+2M^*_1 D_\a(x,t)\Big)\Big\}\\
\le
&\limsup_{t\to\infty}\Big\{t\Big( u^+(x,t)+M^+_1(t_0) D_\a(x,t)\Big)- (M^+_1(t_0)-2M^*_1) t D_\a(x,t)\Big\}
<C\ep.
\end{aligned}
$$
\end{proof}

As an immediate consecuence of  Theorem~\ref{asym1}  we obtain the size estimate
\begin{equation}
\label{eq:improved.size.u}
\|u(\cdot,t)\|_{L^\infty(\mathbb{R}_+)}=O\left(t^{-1}\right).
\end{equation}
This will now be used to get  improved asymptotic estimates for the mass and the first and second momenta.

\begin{coro}
\label{cor:decay.rates.momenta}
Under the hypotheses of Theorem~\ref{asym1},
\begin{equation}\label{eq:decay.rates.momenta}
M(t)=O\big(t^{-1/2}\big),\quad |M_1(t)-M_1^*|=O\big(t^{-1/2}\big),\quad M_2(t)=O\big(t^{1/2}\big)\quad \text{as }t\to\infty.
\end{equation}
\end{coro}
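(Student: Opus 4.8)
The plan is to run exactly the same splitting arguments that were used in Section \ref{sec:momenta} to prove Proposition \ref{mass-decay-rate} and Proposition \ref{Mlimits}, but now feeding in the \emph{improved} pointwise bound $\|u(\cdot,t)\|_{L^\infty(\R_+)}=O(t^{-1})$ from \eqref{eq:improved.size.u} in place of the crude bound $u\le Ct^{-1/2}$. First I would treat the mass. Splitting at a free parameter $\delta=\delta(t)$ and using $\phi(x)\ge x$ (estimate \eqref{eq:estimate.phi}), we get
\[
M(t)=\int_0^{\delta}u(x,t)\,dx+\int_\delta^\infty u(x,t)\,dx\le C\delta\, t^{-1}+\frac1\delta\int_0^\infty u(x,t)\phi(x)\,dx\le C\delta\, t^{-1}+\frac{M_1^*}\delta,
\]
where the conservation law $M_\phi(t)=M_1^*$ was used. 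Optimizing in $\delta$, i.e. taking $\delta(t)=t^{1/2}$, yields $M(t)\le C t^{-1/2}$. The bound on the first momentum is then immediate from \eqref{eq:decay.first.momentum}: since $|x-\phi(x)|\le d$ (again \eqref{eq:estimate.phi}),
\[
|M_1(t)-M_1^*|\le\int_0^\infty u(x,t)|x-\phi(x)|\,dx\le d\,M(t)=O(t^{-1/2}).
\]

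For the second momentum I would again split at a parameter. Write
\[
M_2(t)=\int_0^{R}u(x,t)x^2\,dx+\int_R^\infty u(x,t)x^2\,dx\le C R^3 t^{-1}+\frac1{?}\cdots,
\]
but the tail cannot be controlled by the conserved quantity alone, since $\phi(x)\sim x$ only absorbs one extra power of $x$, not two. So instead I would interpolate: for the tail use $\int_R^\infty u(x,t)x^2\,dx\le \|x u(\cdot,t)\|_{L^1}\cdot$ (no, that loses a power) --- the clean way is $\int_R^\infty u\,x^2\,dx\le \big(\sup_{x\ge R}x\big)$ is useless too. The workable estimate is to combine the $L^\infty$ bound on the near part with the finiteness of the first momentum on the far part only after one more splitting is not enough; rather, one uses $u\le Ct^{-1}$ on $[0,R]$ and, on $[R,\infty)$, the bound $x^2\le \frac{x^2}{R}\cdot\frac{\phi(x)}{x}\cdot\frac{R}{1}$ is circular. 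The correct route, which I would carry out, is: bound $\int_R^\infty u\,x^2\,dx$ using the uniform-in-$t$ finiteness of $M_2(t)$ already established in Section \ref{sec:momenta} together with a Gronwall/differential-inequality argument for $M_2'(t)$. Indeed, differentiating $M_2(t)=\int_0^\infty u\,x^2$ and using $u_t=Lu$ one gets, after the standard manipulation $\int_0^\infty (Lu)x^2\,dx=\int_0^\infty u(x,t)\big(\int (x+z)^2 J(z)\,dz\cdot\mathbf 1 - x^2\big)$ restricted to the half-line with boundary contributions, an inequality of the form $M_2'(t)\le 2\a M(t)+ (\text{lower order})\le C t^{-1/2}$, where the $O(t^{-1/2})$ comes from the just-proved mass decay; integrating gives $M_2(t)\le C t^{1/2}$.

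The main obstacle is precisely this last point: getting the differential inequality $M_2'(t)\le C M(t)+\dots$ cleanly. One must justify differentiation under the integral sign (legitimate since the momenta are finite, by the last Proposition of Section \ref{sec:momenta} and its Remark, and since $\omega(\cdot,t)$ has all finite momenta via \eqref{eq:estimate.omega}), and one must handle the fact that $J*u$ integrated against $x^2$ over $\R_+$ produces a boundary-type term because $u$ is extended by zero for $x<0$; that term has a sign that helps (it is a loss of second momentum across the boundary, hence negative) or at worst is controlled by $\int_0^d u(x,t)\,dx\le C t^{-1}$. Once the inequality $M_2'(t)\le 2\a\, M(t) + C\|u(\cdot,t)\|_{L^1}\le C t^{-1/2}$ is in hand, integration from $1$ to $t$ and the already-known finiteness of $M_2(1)$ give $M_2(t)=O(t^{1/2})$, completing the proof. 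I would present the three estimates in the order mass, first momentum, second momentum, since each uses the previous one.
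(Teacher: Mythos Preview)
Your proposal is correct and follows essentially the same route as the paper: the mass and first-momentum estimates are identical to the paper's, and for $M_2$ the paper also differentiates and obtains $M_2'(t)\le c\,M(t)\le C t^{-1/2}$, then integrates. The paper's computation for $M_2'$ is simply the cleaned-up version of what you eventually arrive at: write $\int_0^\infty (Lu)\,x^2\,dx=\int_{\R}(Lu)\,x^2\,dx-\int_{-\infty}^0\int_0^\infty J(x-y)u(y,t)x^2\,dy\,dx=\int_{\R}u\,Lx^2\,dx-(\text{nonnegative term})\le c\,M(t)$, using that $Lx^2=2\a$ is a constant; your discussion of the boundary term having the helpful sign and of the justification for differentiating under the integral is exactly right, and the false starts with splitting/interpolation can be dropped.
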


\begin{proof}

To obtain the decay estimate for the mass, we just repeat the proof of Proposition~\ref{mass-decay-rate}, this time using~\eqref{eq:improved.size.u}  to bound the integral on $(0,\delta(t))$, and taking $\delta(t)=t^{1/2}$. The estimate for $M_1(t)$ then follows from~\eqref{eq:decay.first.momentum}.

Concerning the estimate of the second momentum, since $Lx^2=c$,
\[
\begin{aligned}
M_2'(t)&=\int_0^\infty Lu(x,t) x^2\,dx= \int_{\R}Lu(x,t) x^2\,dx-\int_{-\infty}^0\int_0^\infty J(x-y)u(y,t)x^2\,dy\,dx\\
&= \int_{\mathbb{R}} u(x,t)\,Lx^2\,dx-\int_{-\infty}^0\int_0^\infty J(x-y)u(y,t)x^2\,dy\,dx\\
&\le  c\int_{\mathbb{R}} u(x,t)\,dx\le C t^{-1/2}.
\end{aligned}
\]
Integrating in $(0,t)$, and using that $M_2(0)<\infty$, the result follows.
\end{proof}

These estimates allow us to obtain an estimate for the error in the far-field asymptotics.
\begin{teo} Under the assumptions of Theorem~\ref{asym1},
\begin{equation}
\label{eq:decay.rate.profile}
t\|u(\cdot,t)+2M^*_1 D_\a(\cdot,t)\|_{L^\infty(\mathbb{R}_+)}=O\big(t^{-1/4}\big)
\quad
\text{as }t\to\infty.
\end{equation}
\end{teo}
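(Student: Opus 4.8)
The plan is to turn the sandwich argument in the proof of Theorem~\ref{asym1} into a quantitative one, the crucial change being that the base time $t_0$ is no longer fixed but chosen as a function of $t$. I would keep the same supersolution $u^+$ and subsolution $u^-$ built there from the antisymmetric extensions $u_0^\pm$ of $u(\cdot,t_0)$, for which $u^-(x,t)\le u(x,t)\le u^+(x,t)$ when $x\ge0$, $t\ge t_0$, and first record how the two quantities governing the barrier asymptotics depend on $t_0$. For the second momenta of the data, computing exactly as in \eqref{M1-super} gives $E^+(t_0):=\int_{\mathbb R}|u_0^+(x)|x^2\,dx=2M_2(t_0)+4dM_1(t_0)+4d^2M(t_0)$ and $E^-(t_0)=2M_2(t_0)$, so that Corollary~\ref{cor:decay.rates.momenta} yields $E^\pm(t_0)=O(t_0^{1/2})$. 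For the first momenta of the data, \eqref{M1-super}--\eqref{M1-sub} together with the same corollary give $|M_1^\pm(t_0)-2M_1^*|=O(t_0^{-1/2})$.

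Next I would quantify the mismatch between the dipole appearing in the barrier estimate~\eqref{eq:Liviu.Rossi}, which is $D(\cdot,\a(t-t_0))$, and the one in the statement, $D_\a(\cdot,t)=D(\cdot,\a t)$. Since $D=\Gamma_x$ and $\Gamma$ solves the heat equation, $\partial_sD(x,s)=\Gamma_{xxx}(x,s)$, and the parabolic scaling of $\Gamma$ gives $|\Gamma_{xxx}(x,s)|\le Cs^{-2}$ uniformly in $x$; hence, for $t_0\le t/2$,
\[
\sup_{x\in\mathbb R}\big|D(x,\a t)-D(x,\a(t-t_0))\big|\le\int_{\a(t-t_0)}^{\a t}\frac{C}{s^{2}}\,ds\le\frac{Ct_0}{t^{2}}.
\]

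With these ingredients, for $x\ge0$ I would write $u^\pm(x,t)+2M_1^*D(x,\a t)$ as the sum of three terms: (I) $u^\pm(x,t)+M_1^\pm(t_0)D(x,\a(t-t_0))$; (II) $M_1^\pm(t_0)\bigl(D(x,\a t)-D(x,\a(t-t_0))\bigr)$; (III) $\bigl(2M_1^*-M_1^\pm(t_0)\bigr)D(x,\a t)$. Multiplying by $t$, term~(I) is controlled by \eqref{eq:Liviu.Rossi} and the bound on $E^\pm(t_0)$, giving $t\,|(\mathrm{I})|\le Ct\,t_0^{1/2}(t-t_0)^{-3/2}$; term~(II) is bounded by the previous display and the boundedness of $M_1^\pm(t_0)$, giving $t\,|(\mathrm{II})|\le Ct_0t^{-1}$; and term~(III) is bounded using $|M_1^\pm(t_0)-2M_1^*|=O(t_0^{-1/2})$ and the uniform boundedness of $tD_\a$ (used already in the proof of Theorem~\ref{asym1}), giving $t\,|(\mathrm{III})|\le Ct_0^{-1/2}$. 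Taking $t_0=t^{1/2}$ balances (I) and (III) at $O(t^{-1/4})$ while (II) is $O(t^{-1/2})$, so $t\,|u^\pm(x,t)+2M_1^*D_\a(x,t)|\le Ct^{-1/4}$ uniformly in $x\ge0$, and the sandwich $u^-\le u\le u^+$ then delivers~\eqref{eq:decay.rate.profile}.

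The point to get right is the optimization of $t_0$: the error in \eqref{eq:Liviu.Rossi} pushes $t_0$ to be small, because the second momentum at time $t_0$ grows like $t_0^{1/2}$, whereas the convergence $M_1^\pm(t_0)\to2M_1^*$ pushes $t_0$ to be large; the trade-off $t_0\sim t^{1/2}$ is what produces the exponent $-1/4$. One must also check, as above, that the dipole time-shift term (II) — which has no counterpart in the local case, where $D$ is inserted directly — stays comfortably below this threshold; this is the genuinely new estimate compared with the qualitative Theorem~\ref{asym1}.
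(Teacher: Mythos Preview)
Your proposal is correct and follows essentially the same approach as the paper: the same antisymmetric barriers $u^\pm$, the same momentum estimates $E^\pm(t_0)=O(t_0^{1/2})$ and $|M_1^\pm(t_0)-2M_1^*|=O(t_0^{-1/2})$ from Corollary~\ref{cor:decay.rates.momenta}, the same choice $t_0=t^{1/2}$, and the same dipole time-shift estimate $t|D_\a(x,t-t^{1/2})-D_\a(x,t)|=O(t^{-1/2})$ (which the paper simply states, while you derive it via $\partial_sD=\Gamma_{xxx}$). Your explicit three-term decomposition (I)--(III) is just a cleaner bookkeeping of the chain of inequalities the paper writes out.
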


\begin{proof}
Using~\eqref{M1-super} and~\eqref{M1-sub} and the first two estimates in~\eqref{eq:decay.rates.momenta}, we get
$$
\begin{aligned}
&|M_1^+(t_0)-2M_1^*|\le 2dM(t_0)+2|M_1(t_0)-M_1^*|=O\big(t_0^{-1/2}\big),\\
&|M_1^-(t_0)-2M_1^*|=2|M_1(t_0)-M_1^*|=O\big(t_0^{-1/2}\big).
\end{aligned}
$$
We now use~\eqref{eq:Liviu.Rossi} and the third estimate in~\eqref{eq:decay.rates.momenta} to get that
$$
(t-t_0)| u^\pm(x,t)+M^\pm_1(t_0)D_\a(x,t-t_0)|=O\big(t_0^{1/2}(t-t_0)^{-1/2}\big).
$$
Thus, for $t\ge t_0$,
$$
\begin{aligned}
&-Ct_0^{1/2}(t-t_0)^{-1/2}-\bar Ct_0^{-1/2}\\
&\qquad<
\Big\{(t-t_0)\Big(u^-(x,t)+M^-_1(t_0) D_\a(x,t-t_0)\Big)- (M^-_1(t_0)-2M^*_1) (t-t_0) D_\a(x,t-t_0)\Big\}\\
&\qquad\le
\Big\{(t-t_0)\Big( u(x,t)+2M^*_1 D_\a(x,t-t_0)\Big)\Big\}\\
&\qquad\le
\Big\{(t-t_0)\Big(u^+(x,t)+M^+_1(t_0) D_\a(x,t-t_0)\Big)- (M^+_1(t_0)-2M^*_1) (t-t_0) D_\a(x,t-t_0)\Big\}\\
&\qquad<\tilde Ct_0^{1/2}(t-t_0)^{-1/2}+ \bar Ct_0^{-1/2}.
\end{aligned}
$$
Taking $t_0=t^{1/2}$ and $t\geq 2$,
and observing that
$$
t|D_\a(x,t-t^{1/2})-D_\a(x,t)|=O\big(t^{-1/2}\big),
$$
we get the stated decay.
\end{proof}

This estimate for the error allows to improve the rate of decay in sets of the form $x\ge \mu t^\beta$, $\beta>1/4$; the precise rate will depend on the spatial scale. This improvement will be crucial when performing the matching with the inner behavior in the next section. Hence, we write the result in the form of Theorem~\ref{thm:main}.

\begin{coro}
\label{coro:outer.behavior.wider}
 Under the assumptions of Theorem~\ref{asym1}, for any $\mu>0$ and $\beta>1/4$  there is a constant $C>0$ such that
\begin{equation*}
\label{eq:decay.rate.profile.wider.region}
\frac{t^{3/2}}{x+1}|u(x,t)+2M^*_1\frac{\phi(x)}{x} D_\a(x, t)|\le Ct^{\frac14-\beta} \qquad\text{for all }x\ge \mu t^\beta,\ t>0.
\end{equation*}
\end{coro}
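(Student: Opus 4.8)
The plan is to split the bracketed quantity into the far-field error, already controlled by the preceding theorem, and a lower-order correction coming from the mismatch between $\phi(x)/x$ and $1$, which we bound directly from the explicit Gaussian form of the dipole. For $x>0$ write
\[
u(x,t)+2M^*_1\frac{\phi(x)}{x}\,D_\a(x,t)=\Big(u(x,t)+2M^*_1D_\a(x,t)\Big)+2M^*_1\,\frac{\phi(x)-x}{x}\,D_\a(x,t).
\]
For the first summand, \eqref{eq:decay.rate.profile} furnishes $t_1>0$ and $C$ with $\big|u(x,t)+2M^*_1D_\a(x,t)\big|\le Ct^{-5/4}$ for $t\ge t_1$, so on the region $x\ge\mu t^\beta$, where $x+1\ge\mu t^\beta$,
\[
\frac{t^{3/2}}{x+1}\,\big|u(x,t)+2M^*_1D_\a(x,t)\big|\le\frac{Ct^{1/4}}{x+1}\le\frac{C}{\mu}\,t^{1/4-\beta}.
\]
For the second summand I would use $0\le\phi(x)-x\le d$ from \eqref{eq:estimate.phi} together with the elementary inequality $|D_\a(x,t)|\le C|x|t^{-3/2}$, immediate from $D_\a(x,t)=D(x,\a t)=-\frac{x}{4\a t\sqrt{\pi\a t}}e^{-x^2/(4\a t)}$ by dropping the exponential; this gives, for $x\ge\mu t^\beta$ and $t\ge1$,
\[
\frac{t^{3/2}}{x+1}\cdot 2M^*_1\,\frac{\phi(x)-x}{x}\,|D_\a(x,t)|\le\frac{t^{3/2}}{x+1}\cdot 2M^*_1\,\frac{d}{x}\cdot\frac{C|x|}{t^{3/2}}=\frac{2CdM^*_1}{x+1}\le\frac{C'}{\mu}\,t^{-\beta}\le\frac{C'}{\mu}\,t^{1/4-\beta}.
\]
Adding the two estimates proves the inequality for all $t\ge T_0:=\max(t_1,1)$.

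It remains to handle $0<t<T_0$, where I would use only crude bounds. From $0\le u\le u^c$ and the representation of $u^c$ through the nonnegative kernel $\omega$ (whose spatial integral is $1-e^{-t}\le1$) one has $u(x,t)\le\|u_0\|_{L^\infty}$, hence $\frac{t^{3/2}}{x+1}|u(x,t)|\le T_0^{3/2}\|u_0\|_{L^\infty}$; and $\phi(x)\le x+d$ with $|D_\a(x,t)|\le C|x|t^{-3/2}$ give $\frac{t^{3/2}}{x+1}\,2M^*_1\frac{\phi(x)}{x}|D_\a(x,t)|\le 2CM^*_1\frac{x+d}{x+1}\le 2CM^*_1(1+d)$. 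Thus the left-hand side of the claimed inequality is bounded on $\R_+\times(0,T_0]$ by a constant $C_0$ depending only on $T_0$, $d$, $M^*_1$, $\|u_0\|_{L^\infty}$ and $\a$. Since $\beta>1/4$, the function $t^{1/4-\beta}$ is nonincreasing, so $C_0\le C_0T_0^{\beta-1/4}\,t^{1/4-\beta}$ on $(0,T_0]$; taking the larger of the two constants obtained for small and large times finishes the proof.

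The argument is essentially bookkeeping and I do not expect a genuine obstacle. The only point worth stressing is that the gain $t^{1/4-\beta}$ is precisely what the weight $t^{3/2}/(x+1)$ manufactures once one restricts to $x\ge\mu t^\beta$: combining $t^{3/2}$ with the far-field decay $t^{-5/4}$ produces $t^{1/4}$, and dividing by $x+1\gtrsim t^{\beta}$ costs one further power $t^{-\beta}$; meanwhile the correction term is $O(t^{-3/2})$ for $x>0$, which after the weight and the division is even smaller on $x\ge\mu t^\beta$. The minor technical nuisance is that \eqref{eq:decay.rate.profile} is only available for large $t$, so small times must be discarded by hand — but there the target bound $Ct^{1/4-\beta}$ diverges, so any crude estimate suffices.
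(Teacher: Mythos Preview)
Your proof is correct and follows essentially the same route as the paper: the identical splitting into the far-field error term handled via \eqref{eq:decay.rate.profile} and the correction $(\phi(x)/x-1)D_\a$ bounded using \eqref{eq:estimate.phi}. The only cosmetic differences are that the paper uses the cruder bound $|D_\a(\cdot,t)|=O(t^{-1})$ for the correction (yielding $t^{2(1/4-\beta)}$ rather than your $t^{-\beta}$, both dominated by $t^{1/4-\beta}$) and does not spell out the small-$t$ case, which you dispatch explicitly.
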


\begin{proof} On sets of the mentioned form we have, on the one hand,
$\frac{t^{1/2}}{x+1}\le \frac{
t^{1/2-\beta}}\mu$ .
On the other hand, since $D_\a(\cdot,t)=O(t^{-1})$,  estimate~\eqref{eq:estimate.phi} on $\phi$ yields
$$
\frac{t^{3/2}}{x+1}\left|D_\a(x,t)\right|\left|\frac{\phi(x)}x-1\right|\le  Ct^{2(\frac14-\beta)}.
$$
The result then follows using~\eqref{eq:decay.rate.profile}.
\end{proof}

\bigskip

\section{Near-field limit}
\setcounter{equation}{0}

In view of Corollary~\ref{coro:outer.behavior.wider}, what is left to complete the proof of
Theorem~\ref{thm:main} is to show that the limit~\eqref{eq:main.result} is
valid uniformly in sets of the form $0\le x<\mu t^\beta$ for some
$\mu>0$, $\beta>1/4$.
Since $D_\a(x,t)=-\frac{x}{2\a t}\Gamma_\a(x,t)$ and
\begin{equation}
\label{eq:omega.gamma}
t^{1/2}\|
\omega(\cdot,t)-\Gamma_\a(\cdot,t)\|_{L^\infty(\R)}\to0\quad\mbox{as
}t\to\infty,
\end{equation}
see~\cite{IR1}, and $\phi(x)/(x+1)$ is bounded,  this will follow from the next result.

\begin{teo}\label{thm:inner.behavior}
Under the assumptions of Theorem~\ref{thm:main}, for any  $\beta\in\left(\frac14,\frac12\right)$ and $\mu>0$ we have
\begin{equation*}
\label{eq:thm.inner.behavior}
\sup_{0\le x\le\mu t^\beta}\left(\frac{t^{3/2}}{x+1}\left|u(x,t)-\frac{M^*_1\phi(x)\omega(x,t)}{\a t}\right|\right)\to 0
 \qquad\text{as }t\to\infty.
\end{equation*}
\end{teo}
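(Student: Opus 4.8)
The plan is to prove Theorem~\ref{thm:inner.behavior} by a sub/supersolution argument for the rescaled unknown, mimicking the formal computation in the Introduction but carried out directly at the nonlocal level. Set $w(x,t)=\frac{t^{3/2}}{\phi(x)}u(x,t)$ on $x\ge 0$ (using $\phi(x)\ge x\ge 0$, with the understanding that near $x=0$ we work with $\frac{t^{3/2}}{x+1}u$ and use that $\phi(x)/(x+1)$ is bounded above and below away from zero on compact sets, and $\phi(x)\sim x$). Since $J*\phi=\phi$ on $\R_+$, the natural identity is that $\phi(x)w$ solves $(\phi w)_t = L(\phi w)$, which after dividing by $\phi$ and using $J*\phi=\phi$ produces a nonlocal analogue of $w_t = \frac1{\phi}\,L_\phi w + \frac32\frac{w}{t}$, where $L_\phi$ is a nonlocal operator with kernel weighted by $\phi$; the point is that $L_\phi$ annihilates constants (because $J*\phi=\phi$), so constants are, up to the $\frac32 w/t$ term, stationary. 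The target profile corresponds to $w\to \frac{M_1^*}{2\sqrt{\pi}\,\a^{3/2}}\cdot\text{(const)}$, matching the outer constant read off from Corollary~\ref{coro:outer.behavior.wider}.

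First I would record the barrier input: by Corollary~\ref{coro:outer.behavior.wider}, for any fixed $\beta'\in(\frac14,\frac12)$ we know $\frac{t^{3/2}}{x+1}\big|u(x,t)+2M_1^*\frac{\phi(x)}{x}D_\a(x,t)\big|\le C t^{\frac14-\beta'}$ on $x\ge\mu' t^{\beta'}$; equivalently, on the sphere $x\approx \mu' t^{\beta'}$ the rescaled solution is pinned, up to $o(1)$, to the value dictated by the dipole, and using $D_\a(x,t)=-\frac{x}{2\a t}\Gamma_\a(x,t)$ together with \eqref{eq:omega.gamma} one rewrites the claimed profile $\frac{M_1^*\phi(x)\omega(x,t)}{\a t}$ in a form consistent with this boundary value. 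So the strategy is: pick an intermediate scale $t^{\beta'}$ with $\frac14<\beta'<\beta$, treat the region $\{x\le \mu' t^{\beta'}\}$ as the actual inner region with a controlled Dirichlet-type condition on its moving outer edge (from the Corollary) and $u=0$ on $x<0$, and build sub/supersolutions of $u_t=Lu$ there. The candidate barriers are $\phi(x)\cdot\omega(x,t)\cdot\big(\frac{M_1^*}{\a t}\pm \eta(t)\big)\pm$ (correction terms handling the $\frac32 w/t$ drift and the fact that $\omega$ is only asymptotically the heat profile), plus possibly an additive $\varepsilon\, t^{-3/2}(x+1)$ to absorb lower-order errors. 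One must check that $L(\phi\omega)-(\phi\omega)_t$ has a sign after multiplying by the slowly varying factor, using $J*\phi=\phi$, the explicit PDE-like relation $\omega_t = L\omega$ (from \eqref{eq:series.omega}, $\omega$ solves the nonlocal heat equation away from the $\delta$ at $t=0^+$), and the bounds on $\omega$ and its moments from \eqref{eq:estimate.omega}.

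The key steps in order: (1) Translate the Corollary into a sharp two-sided bound for $u$ on $x= c\,t^{\beta'}$ of the form $\big|u(x,t)-\frac{M_1^*\phi(x)\omega(x,t)}{\a t}\big|\le o(1)\frac{x+1}{t^{3/2}}$ there, by expanding $D_\a$ and using \eqref{eq:omega.gamma}; (2) verify that $Z^\pm(x,t):=\frac{\phi(x)\omega(x,t)}{\a t}\big(M_1^*\pm\delta\big)\pm K\varepsilon\frac{x+1}{t^{3/2}}$ (with appropriate lower-order $t^{-5/2}$-type corrections) are a super/subsolution of $u_t=Lu$ on $\{0\le x\le \mu' t^{\beta'},\ t\ge T\}$ for $T$ large — this uses $L\phi=0$ on $\R_+$, $\omega_t=L\omega$, and careful handling of the boundary strip $(-d,0)$ where $Z^\pm>0=u$ is needed for the supersolution (automatic since $\phi\ge0$, $\omega\ge0$) and $Z^-\le 0$ for the subsolution (here one must be slightly careful: take the subsolution's extension to be $0$ on $(-d,0)$ and check $L$ only gets more negative, i.e. the convolution term only helps); (3) compare on the parabolic-type boundary of the region using step (1) and the fact that on compact sets $\frac{t^{3/2}}{t^{3/2}}\cdot$const is the right order; (4) send $T\to\infty$ then $\delta,\varepsilon\to0$. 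I expect the main obstacle to be step (2), specifically showing the sign of $L(\phi\omega)-(\phi\omega)_t$: because $\omega$ is not exactly a heat profile and $\phi$ is not exactly $x$, cross terms of the form $\int J(x-y)\big(\phi(y)-\phi(x)\big)\big(\omega(y,t)-\omega(x,t)\big)\,dy$ appear, and one must show these are lower order relative to $\frac{\phi\omega}{t^2}$ uniformly on $x\le \mu' t^{\beta'}$ — this is where the restriction $\beta<\frac12$ and the decay of $\omega$ in the tail (via \eqref{eq:estimate.omega}) and the sublinear growth $\phi(x)-x=O(1)$ get used, and it is delicate because $\omega(x,t)$ itself decays rapidly once $x\gg t^{1/2}$, so the region must genuinely stay inside the diffusive scale, forcing $\beta<\frac12$.
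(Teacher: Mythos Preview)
Your overall architecture matches the paper's: build barriers of the form ``leading profile $\frac{\phi(x)\omega(x,t)}{\a t}$ plus a correction'', verify the PDE inequality on $\{0\le x\le \mu t^\beta\}$, and compare on the moving boundary using Corollary~\ref{coro:outer.behavior.wider}. You also correctly isolate the cross term $\int J(x-y)\bigl(\phi(y)-\phi(x)\bigr)\bigl(\omega(y,t)-\omega(x,t)\bigr)\,dy$ as the main error (it is $O(t^{-2})$ via $|\phi(x)-\phi(y)|\le 2d$ and $|\omega_x|\le ct^{-1}$). One minor slip: $\omega$ does not satisfy $\omega_t=L\omega$ exactly but rather $\omega_t-L\omega=e^{-t}J(x)$; this is harmless since the right-hand side is exponentially small.

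The genuine gap is your choice of correction term. Since $J$ is symmetric with unit mass, $L$ annihilates affine functions: $L(x+1)=0$. Hence your additive term $\pm K\varepsilon\,(x+1)t^{-3/2}$ contributes only $\mp\tfrac32 K\varepsilon\,(x+1)t^{-5/2}$ to $\partial_t Z^\pm-LZ^\pm$, which has the \emph{wrong} sign to absorb errors in the supersolution and, in any case, is of order $t^{\beta-5/2}$, strictly smaller than the $O(t^{-2})$ cross-term you must dominate. No ``$t^{-5/2}$-type'' lower-order correction can rescue this, because any affine-in-$x$ term is $L$-harmonic and therefore inert. What is needed is a correction $z(x)$ with $Lz(x)<0$ strictly, so that $-K_\pm t^{-\alpha}Lz(x)>0$ furnishes a positive damping term. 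The paper takes $z(x)=(x+2d)^\gamma$ with $\gamma\in(0,1)$: by Taylor expansion and concavity, $Lz(x)\le -\tfrac{\a\gamma(1-\gamma)}{2}(x+3d)^{\gamma-2}$, and with time weight $t^{-(3+\kappa)/2}$ this yields a positive contribution of order $t^{-\frac{3+\kappa}{2}+(\gamma-2)\beta}$, which dominates $t^{-2}$ precisely when $\beta<\frac{1-\kappa}{2(2-\gamma)}$; choosing $\kappa$ small and $\gamma$ close to $1$ covers every $\beta<\tfrac12$. This concavity mechanism is the missing ingredient in your barrier, and without it step~(2) fails.
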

Notice that the same kind of argument, combined with Corollary~\ref{coro:outer.behavior.wider}, shows that for any $\beta>1/4$ and all $\mu>0$ we have
\begin{equation}
\label{eq:inner.omega}
\sup_{x\ge \mu t^\beta}\left(\frac{t^{3/2}}{x+1}\left|u(x,t)-\frac{M^*_1\phi(x)\omega(x,t)}{\a t}\right|\right)\to 0
\quad\text{as }t\to\infty.
\end{equation}
The advantage of this formulation in terms of $\omega$ is that  it is more straightforward to apply the nonlocal operator $L$ to $\omega$ than to $D_\a(x,t)/x$.

In order to prove Theorem~\ref{thm:inner.behavior} we will construct suitable
barriers approaching the asymptotic limit as $t$ goes to infinity.
We choose $\kappa\in(0,1)$,
$\gamma\in(0,1)$, and then define, for any $K_\pm>0$,
\begin{equation*}
\label{eq:def.parabolic.barriers} v_\pm(x,t)=\frac{\phi(x)\omega(x,t)}t\pm
K_\pm t^{-\frac {3+\kappa}{2}}z(x),\qquad z(x)=(x+2d)^{\gamma}.
\end{equation*}
Our barriers will be adequate multiples of $v_\pm$.

\begin{lema}
\label{lem:v.plus.minus}
Let $\kappa,\gamma\in(0,1)$  and  $v_\pm$ as above. For all
$\beta< \frac{1-\kappa}{2(2-\gamma)}$, $\mu>0$ and $K_\pm\ge1$, there is a value $t^*=t^*(\mu,\beta,\a,\kappa,\gamma)$ such that
\begin{equation}
\label{eq:lem.v.plus.minus}
\partial_tv_+-Lv_+\ge 0,\quad \partial_t v_{-}-Lv_-\le0, \qquad 0\le x\le \mu t^\beta,\ t\ge t^*.
\end{equation}
\end{lema}

\begin{proof}
On the one hand,
$$
\partial_t v_+(x,t)=\frac{\phi(x)\partial_t \omega(x,t)}{t}-\frac{\phi(x) \omega(x,t)}{t^2} -\frac {3+\kappa}{2t}K_+t^{-\frac {3+\kappa}{2}}z(x).
$$
On the other hand, using Taylor's expansion and the radial symmetry of
$J$, we obtain
\begin{equation*}\label{potencia}
Lz(x)\le -\frac{\a\gamma(1-\gamma)}2(x+3d)^{\gamma-2}\le-\frac{\a\gamma(1-\gamma)z(x)}{2(x+3d)^2} .
\end{equation*}
Hence,  since $\phi$ is $L$-harmonic, we get
$$
\begin{aligned}
Lv_+(x,t)=& \frac{\phi(x)L\omega(x,t)}t+\frac1t\int_{\mathbb{R}} J(x-y)\big(\phi(y)-\phi(x)\big)\big(\omega(y,t)-\omega(x,t)\big)\,dy\\
&+K_+t^{-\frac{3+\kappa}{2}}L z(x)\qquad \text{for all }x\ge 0.
\end{aligned}
$$
Therefore, since $\omega$ solves
\begin{equation*}\label{eq-W}
\begin{cases}
\partial_t\omega(x,t)-L\omega(x,t)=e^{-t}J(x)\quad&\mbox{in }\R\times(0,\infty),\\
\omega(x,0)=0\quad&\mbox{in }\R,
\end{cases}
\end{equation*}
 and $\phi\ge0$, we get
$$
\begin{aligned}
\partial_t v_+-Lv_+\geq&-\underbrace{\frac1t\int_{\mathbb{R}} J(x-y)\big|\phi(y)-\phi(x)\big|\big|\omega(y,t)-\omega(x,t)\big|\,dy}_{\mathcal{A}}\\
&+K_+\underbrace{t^{-\frac{3+\kappa}{2}}z(x)\left(\frac{\a\gamma(1-\gamma)}{2(x+3d)^2}-\frac {3+\kappa}{2t}\right)}_{\mathcal{B}}-\underbrace{\frac{\phi(x)\omega(x,t)}{t^2}}_{\mathcal{C}}.
\end{aligned}
$$
Thanks to~\eqref{eq:estimate.phi}, we have
$ |\phi(x)-\phi(y)|\le
2d$  if  $|x-y|\le d$.
On the other hand, $|\omega_x(x,t)|\le c t^{-1}$; see~\cite{IR1}. Therefore,
$\mathcal{A}\le C_1t^{-2}$.

As for $\mathcal{B}$, if $0<x<\mu t^\beta$ and $t$ is large,  then
$$
\frac1{(x+3d)^{2}}\ge\frac1{4\mu^2t^{2\beta}}.
$$
Since  $\beta<1/2$, if $t$ is large enough, how large depending only on $\mu$, $\beta$, $\a$, $\kappa$ and $\gamma$,
we have
$$
\mathcal{B}\ge C_2t^{-\frac{3+\kappa}2+(\gamma-2)\beta}
$$
for some constant $C_2>0$.

Finally, using again the estimate~\eqref{eq:estimate.phi}, together with the bound $|\omega(x,t)|\le c t^{-1/2}$ \cite{IR1},  we obtain $\mathcal{C}\le C_3t^{\beta-\frac52}$.

The above estimates for $\mathcal{A}$, $\mathcal{B}$ and $\mathcal{C}$ yield, if $K_+\ge1$,
$$
\partial_t v_+-Lv_+\ge - C_1t^{-2}+C_2t^{-\frac{3+\kappa}2+(\gamma-2)\beta}- C_3t^{\beta-\frac52}>0,
$$
if $\beta<\frac{1-\kappa}{2(2-\gamma)}$ and $t$ is large enough, how large not depending on $K_+$.

An analogous argument leads to the statement concerning $v_-$,
since $$
0\le\frac{ \phi(x)e^{-t}J(x)}t\le Ct^{\beta-1}e^{-t}, \qquad 0\le x\le \mu t^\beta.
$$
\end{proof}

\begin{proof}[Proof of Theorem~\ref{thm:inner.behavior}.]
We take $\kappa\in(0,1)$ small enough and $\gamma\in(0,1)$ close enough to 1 so that $\beta<\frac{1-\kappa}{2(2-\gamma)}$. Then, as a consequence of Lemma~\ref{lem:v.plus.minus}, we know that there is a value $t^*$ such that~\eqref{eq:lem.v.plus.minus} holds.

Let now $\ep>0$. Since $\beta>1/4$, \eqref{eq:inner.omega} implies that there is a time $t_\ep$ such that
$$
u(x,t)-\frac{M_1^*\phi(x)\omega(x,t)}{\a t}\le \ep \frac{x+1}{t^{3/2}}\le \ep 2(\mu+1)t^{\beta-\frac32}\qquad\text{if }\mu t^\beta\le x\le(\mu+1) t^\beta,\ t\ge t_\ep.
$$
Besides, $t^{\frac32-\beta}|D_\a(x,t)|\ge c_{\a,\mu}>0$ if $\mu t^\beta\le x\le (\mu+1)t^\beta$. Therefore, using~\eqref{eq:omega.gamma} once more,
$$
\frac{x\omega(x,t)}{\a t}\ge c_{\a,\mu}t^{\beta-\frac32}\qquad\text{if }\mu t^\beta\le x\le(\mu+1) t^\beta,\ t\text{ large}.
$$
Hence, since $1\le \phi(x)/x$ and $\beta<1/2$, there is a large time $t^+\ge t^*$ such that
$$
u(x,t)-\frac{M_1^*\phi(x)\omega(x,t)}{\a t}\le C\ep \frac{M_1^*\phi(x)\omega(x,t)}{\a t}\qquad\text{if }\mu t^\beta\le x\le(\mu+1) t^\beta,\ t\ge t^+.
$$
We conclude that
for any $K_+\ge0$,
$$
u(x,t)\le (1+C\ep)\frac{M_1^*}\a v_+(x,t)\quad\mbox{if }\mu t^\beta\le x\le(\mu+1) t^\beta,\ t\ge t^+.
$$

On the other hand, we trivially have
$
u(x,t)=0\le (1+C\ep)\frac {M_1^*}\a v_+(x,t)$, if  $x\in(-d,0)$, $t\ge t^+$.
Finally, it is obvious that there exists $K_+\ge1$ such that $u(x,t^+)\le (1+C\ep)\frac {M_1^*}\a v_+(x,t^+)$ if $0\le x\le (\mu+1) (t^+)^{\beta}$.

Putting everything together, and using comparison, we get
$$
u(x,t)\le (1+C\ep)\frac{M_1^*}\a v_+(x,t),\quad0\le x\le  \mu t^\beta,\ t\ge t^+.
$$
Hence, using the decay estimate $0\le \omega(x,t)\le t^{-1/2}$ and the upper estimate on $\phi$,
$$
\begin{array}{rcl}
\displaystyle\frac{t^{3/2}}{x+1}\left(u(x,t)-\frac{M_1^*\phi(x)\omega(x,t)}{\a t}\right)&\le& \displaystyle C\ep \frac{M_1^*}\a\frac{\phi(x)}{x+1}\omega(x,t)t^{1/2}+(1+C\ep)\frac{M_1^*}\a K_+t^{-\frac\kappa2}\frac{z(x)}{1+x}\\
&\le&\displaystyle C\ep +(1+C\ep)C t^{-\frac\kappa2},
\end{array}
$$
if $0\le x\le  \mu t^\beta$, $t\ge t^+$.
Letting $t\to\infty$ and then $\ep\to0$, we conclude that
$$
\limsup\limits_{t\to\infty}\sup_{0\le x\le \mu t^\beta}\left(\frac{t^{3/2}}{x+1}\Big(u(x,t)-\frac{M^*\phi(x)\omega(x,t)}{\a t}\Big)\right)\le 0.
$$

An analogous argument shows that
$$
\liminf\limits_{t\to\infty}\sup_{0\le x\le \mu t^\beta}\left(\frac{t^{3/2}}{x+1}\Big(u(x,t)-\frac{M^*\phi(x)\omega(x,t)}{\a t}\Big)\right)\ge 0.
$$
\end{proof}

We may use the same ideas to obtain the inner
behavior for the standard heat equation. In this case we have a better bound for the error in the outer expansion,  and things are easier.

\begin{proof}[Proof of Theorem~\ref{thm:heat}.]
Let $u$ be the solution to the local problem \eqref{eq:local.heat.equation}. The results from \cite{DZ} yield
\begin{equation}
\label{eq:outer.heat}
|u(x,t)+2M_1^*D(x,t)|\le C t^{-3/2}.
\end{equation}
for some constant $C>0$.
Therefore, for any $\beta>0$, $\mu>0$,
\begin{equation*}
\label{eq:outer.heat.rates}
\frac{t^{3/2}}{x}|u(x,t)+2M_1^*D(x,t)|\le Ct^{-\beta}\quad \text{if }x\ge\mu t^\beta.
\end{equation*}
Unfortunately, this does not give the desired behavior on compact sets, since this would require to take $\beta=0$.
To obtain the inner behavior, we use as barriers suitable multiples of the functions
$$
v_\pm(x,t)=-D(x,t)\pm K_\pm t^{-\frac{3+\kappa}2}(x+1)^\gamma,\quad \kappa,\gamma\in(0,1), K_\pm\ge 1.
$$
An easy computation shows that
$$
\partial_t v_\pm(x,t)-\partial_{xx}v_\pm(x,t)=\pm K_\pm t^{-\frac{3+\kappa}2}(x+1)^\gamma\left(\frac{\gamma(1-\gamma)}{(x+1)^2}-\frac{3+\kappa}{2t}\right).
$$
Thus, $v_+$ and $v_-$ are respectively a sub- and a super-solution of the heat equation in the set
$$
0<x<\underbrace{\left(\frac{\gamma(1-\gamma)}{2(3+\kappa)}\right)^{1/2}}_{\mu_*}t^{1/2},\quad t>t_\mu:=\mu^{-1/\beta}.
$$

Let $\mu<\mu_*$, and $\ep>0$. Since $-tD(\mu t^{1/2},t)=c_\mu>0$, using~\eqref{eq:outer.heat} we get that
there is a time $t_\ep\ge t_\mu$ such that
$$
u(\mu t^{1/2},t)+2M_1^*D(\mu t^{1/2},t)\le Ct^{-3/2}\le-\ep 2M_1^*D(\mu t^{1/2},t)\quad\text{if }t\ge t_\ep.
$$
Thus,
$$
u(\mu t^{1/2},t)\le (1+\ep)2M_1^*v_+(\mu t^{1/2},t)\quad\text{if }t\ge t_\ep.
$$
We also trivially have $u(0,t)=0\le(1+\ep)2M_1^*v_+(0,t)$ for all $t>0$. Finally, if $K_+$ is large enough,
$u(x,t_\ep)\le (1+\ep)2M_1^*v_+(x,t_\ep)$ for $0<x<\mu t_\ep^{1/2}$. Using comparison, we conclude that
$$
u(x,t)\le (1+\ep)2M_1^*v_+(x,t), \quad 0<x<\mu t^{1/2},\ t\ge t_\ep.
$$
An analogous computation yields
$$
u(x,t)\ge (1-\ep)2M_1^*v_-(x,t), \quad 0<x<\mu t^{1/2},\ t\ge t_\ep.
$$
The proof follows letting first $t\to\infty$ and then $\ep\to0$.
\end{proof}

\bigskip

\end{document}